\begin{document}

\newtheorem {thm}{Theorem}[section]
\newtheorem{corr}[thm]{Corollary}
\newtheorem {alg}[thm]{Algorithm}
\newtheorem*{thmstar}{Theorem}
\newtheorem{prop}[thm]{Proposition}
\newtheorem*{propstar}{Proposition}
\newtheorem {lem}[thm]{Lemma}
\newtheorem*{lemstar}{Lemma}
\newtheorem{conj}[thm]{Conjecture}
\newtheorem{ques}[thm]{Question}
\newtheorem*{conjstar}{Conjecture}
\theoremstyle{remark}
\newtheorem{rem}[thm]{Remark}
\newtheorem{np*}{Non-Proof}
\newtheorem*{remstar}{Remark}
\theoremstyle{definition}
\newtheorem{defn}[thm]{Definition}
\newtheorem*{defnstar}{Definition}
\newtheorem{exam}[thm]{Example}
\newtheorem*{examstar}{Example}
\newcommand{\pd}[2]{\frac{\partial #1}{\partial #2}}
\newcommand{\pdtwo}[2]{\frac{\partial^2 #1}{\partial #2^2}}
\def\Ind{\setbox0=\hbox{$x$}\kern\wd0\hbox to 0pt{\hss$\mid$\hss} \lower.9\ht0\hbox to 0pt{\hss$\smile$\hss}\kern\wd0}
\def\Notind{\setbox0=\hbox{$x$}\kern\wd0\hbox to 0pt{\mathchardef \nn=12854\hss$\nn$\kern1.4\wd0\hss}\hbox to 0pt{\hss$\mid$\hss}\lower.9\ht0 \hbox to 0pt{\hss$\smile$\hss}\kern\wd0}
\def\ind{\mathop{\mathpalette\Ind{}}}
\def\nind{\mathop{\mathpalette\Notind{}}} 
\newcommand{\m}{\mathbb }
\newcommand{\mc}{\mathcal }
\newcommand{\mf}{\mathfrak }

\title{Indecomposability for differential algebraic groups}
\author{James Freitag}

\address{freitag@math.berkeley.edu \\
Department of Mathematics\\
University of California, Berkeley\\
970 Evans Hall\\
Berkeley, CA 94720-3840 }
\begin{abstract}  We study a notion of indecomposability in differential algebraic groups which is inspired by both model theory and differential algebra. After establishing some basic definitions and results, we prove an indecomposability theorem for differential algebraic groups. The theorem establishes a sufficient criterion for the subgroup of a differential algebraic group generated by an infinite family of subvarieties to be a differential algebraic subgroup. This theorem is used for various definability results. For instance, we show every noncommutative almost simple differential algebraic group is perfect, solving a problem of Cassidy and Singer. We also establish numerous bounds on Kolchin polynomials, some of which seem to be of a nature not previously considered in differential algebraic geometry; in particular, we establish bounds on the Kolchin polynomial of the generators of the differential field of definition of a differential algebraic variety. 
\end{abstract}

\maketitle

The indecomposability theorem of Zilber, generalized to the setting of weakly categorical groups \cite{Zilberindecomp} the well-known theorem of algebraic group theory, which states that a family of irreducible subvarieties passing through the identity element of an algebraic group generates an algebraic subgroup \cite{Rosenlitch1}. The theorem is a powerful tool for definability results in groups of finite Morley rank. Zilber's theorem was generalized to the superstable (possibly infinite rank) setting by Berline and Lascar \cite{BerlineLascar}. 

Because the theory of differentially closed fields of characteristic zero with $m$ commuting derivations, denoted by $DCF_{0,m}$, is $\omega$-stable, hence superstable, their generalization of Zilber's theorem holds for differential algebraic groups, which are the definable groups in $DCF_{0,m}$. However, when $m>1$, it is difficult to show that a differential algebraic group satisfies the connectivity hypothesis of the Berline-Lascar indecomposability theorem \cite{BerlineLascar}. In the case of superstable groups, the connectivity hypotheses are phrased in terms of Lascar rank. In the theory of differential algebraic groups, another notion of dimension arises more often in applications; the Kolchin polynomial is a numerical polynomial which tracks the growth of transcendence degree of a generic point of the group under application of the derivations. There is no known lower bound for Lascar rank in terms of the known differential birational invariants of the Kolchin polynomial \cite{SuerThesis}. The lack of control of Lascar rank in terms of the Kolchin polynomial affects both the hypothesis and conclusion of the differential algebraic case of the Berline-Lascar indecomposablity theorem. With that in mind, we prove an idecomposability theorem for differential algebraic groups in which both the hypothesis and the conclusion are purely differential algebraic in nature, although the proof contains model-theoretic concepts and techniques. 

Cassidy and Singer \cite{CassidySinger}, introduced \emph{almost simple} and \emph{strongly connected} differential algebraic groups, and these notions play prominent roles here. 
Every differential algebraic group $G$ has a strongly connected component $H$, which is a characteristic subgroup such that $G/H$ is much smaller than $G$ as measured by the degree of the Kolchin polynomial, which Kolchin called the \emph{differential type} of $G$. In the classical theory of differential equations, this degree is the number of independent variables on which the general solution of a system of differential equations depends. A differential algebraic group is \emph{almost simple} if every proper definable normal subgroup of $G$ has smaller differential type than $G$. Almost simple differential algebraic groups are strongly connected. Using a Jordan-H\"older style theorem of \cite{CassidySinger}, one sees that every differential algebraic group has a subnormal series in which the successive quotients are almost simple. The definability of the quotients follows from the fact that $DCF_{0,m}$ \emph{eliminates imaginaries}. Quotient structures were also investigated by Kolchin \cite{KolchinDAG}. Further, the quotients which appear are unique up to a suitable notion of isogeny. As is clear in the later sections of \cite{CassidySinger}, analyzing the noncommutative almost simple groups would be aided by knowing that the groups were perfect. This follows from a corollary of our main indecomposability theorem - derived subgroups of strongly connected groups are closed in the Kolchin topology. The analysis of strongly connected groups of small typical differential dimension can be carried out \cite{JIndecomp2} along the same lines as the analysis of groups of small Morley rank \cite{CherlinSmall}. Perfection of the noncommutative almost simple groups also seems to be a necessary requirement for studying the groups via algebraic $K$-theory along the lines of \cite{CherlinAltinel}. We do not pursue that route in this paper; see \cite{CassidySinger} for a discussion of this topic. 

We also discuss open problems which could connect our results to those of Berline and Lascar \cite{BerlineLascar}. Generalizing this work to the difference-differential setting (or even more general settings) is of interest, but is not covered here. Also, though there are well-developed theories of numerical polynomials in these more general settings \cite{PolyD}; work along the lines of \cite{CassidySinger} (in those settings) would seem to be a prerequisite for proving results like those in this paper. The main original motivation for this work was the analysis of group theoretic properties of almost simple (and more generally strongly connected) differential algebraic groups. As mentioned above, this sort of analysis is one of the first steps in any attempt to classify almost simple differential algebraic groups. For further discussion of these issues, see \cite{CassidySinger}. 

In addition to the group theoretic developments in this paper, Theorem \ref{first} is likely to be of independent interest in differential algebraic geometry. Start with a differential field $k$ and a tuple $\bar a$ in a differential field extension.

Roughly speaking, the result says that if we extend $k$ to a differential field $K_1$ in such a way that the Kolchin polynomial of $\bar a$ over $k$ differs from the Kolchin polynomial of $\bar a$ over $K_1$ in a specific bounded manner, then there is an intermediate field extension, $K$ such that the Kolchin polynomial of $\bar a$ over $K$ is equal to the Kolchin polynomial of $\bar a$ over $K_1$ and $K$ is generated as a differential field extension over $k$ by some tuple which has a Kolchin polynomial that is similarly bounded. So, the result gives bounds on Kolchin polynomials of the generators of relative fields of definition (or simply fields of definition, assuming $k=\m Q$). In proving Theorem \ref{first}, we found need for the analogue of the Lascar inequality for Kolchin polynomials. Let $A \subseteq K$. We remind the reader that the Lascar inequality says:
$$RU ( \bar a / A \langle b \rangle ) + RU ( \bar b /A) \leq RU ((\bar a,  \bar b) /A ) \leq RU ( \bar a / A \langle b \rangle ) \oplus RU ( \bar b /A).$$ 
The sum of two ordinals, denoted by $\alpha + \beta$, is the order type of $\alpha \cup \beta$ with the order given by letting every element of $\alpha$ be less than any element of $\beta$. 
We should mention that every ordinal may be written uniquely in the form 
$$\omega ^ {\alpha _1} a_1 + \omega ^{\alpha _2} a_2 + \ldots + \omega ^{ \alpha _k} a_k,$$ 
where $\alpha_i$ are ordinals and $a_i$ are \emph{positive integers}. For ordinals written in this \emph{Cantor normal form}, the operation $\oplus$ is simple; the sum of two such ordinals works as if summing polynomials in $\omega$. We do not know what the analogue of the right inequality (the upper bound) ought to be in the context of Kolchin polynomials. However, the left inequality (the lower bound) was established in a preliminary version of this paper. The referee pointed out that this result was embedded in a paper of William Sit \cite{SitPoly}. The precise statement of Sit's Lemma is recorded here as Lemma \ref{Sitlemma}. 

The paper is organized as follows. Section 1 sets up the basic definitions and background from model theory and differential algebra, providing references when this is not feasible. Section 2 covers stabilizers of types (in the sense of model theory, see the definitions of section \ref{defnn}) in differential algebraic groups. The analysis of indiscernible sequences is a major tool in model theory. We use indiscernible sequences to calculate bounds on the Kolchin polynomials of stabilizers of certain types (in the sense of model theory) in differential algebraic groups. In particular, in this section we prove several new results in differential algebraic geometry, some having nothing in particular to do with groups. For instance, Theorem \ref{first} provides a bound on the Kolchin polynomial of the smallest field of definition of a variety. Section 3 gives the main theorem, a definability result for differential algebraic groups, in which we prove that a certain a priori abstract subgroup of a differential algebraic group is, in fact, a definable subgroup. Section 4 and section 5 give applications of the main theorem and show some specific examples. Section 6 discusses some open problems and possible generalizations of the work. Generalizations of the conditions of the theorem in the setting of differential fields are considered. We also discuss generalization of the setting itself, via adding more general operators to the fields.

\section{Definitions and notation}\label{defnn}
A differential field is a field $K$ together with $m$ commuting derivations, $\delta_i : K \rightarrow K$. Let $(\delta_1 ^{i_1} \dots \delta_m ^{i_m} x_j) _{1 \leq j \leq n , i_j \in \m N}$ be a family of indeterminates over $K$. The finitely generated differential polynomial algebra $R = K \{ x_1, \dots , x_n \}$ over $K$ equals the countably generated polynomial algebra $K [ (\delta_1 ^{i_1} \dots \delta_m ^{i_m} x_j) _{1 \leq j \leq n , i_j \in \m N} ],$ with the obvious extension of the derivation operators to $R$. The generators are called \emph{differential indeterminates,} and the elements of $R$ are called \emph{differential polynomials.} 
Let $I$ be an index set. A system of partial differential algebraic equations is a collection $(P_i=0)_{i \in I}$, where $P_i$ is a differential polynomial. 
 A solution to the system is a tuple $\bar  a \in L^n$ so that $P_i (\bar a )=0$ for $i \in I$ where $L$ is a differential field extension of $K.$ We use the notation $K \langle a \rangle$ for the differential field generated by $a$ over $K$. 

The model theory of partial differential fields of characteristic zero with finitely many commuting derivations was developed in \cite{McGrail}. In this setting, there is a model companion, which we denote $DCF_{0,m},$ which is characterized by the property that finite systems of differential polynomial equations over a model of $DCF_{0,m}$ which have a solution in some differential field extension already have a solution in the model. Writing this characterization via first order axioms is slightly more complicated \cite{McGrail}. 
For a reference in differential algebra, we suggest \cite{KolchinDAAG} and \cite{MMP}. Throughout this paper, $K \models DCF_{0,m}$ (this is standard model theoretic notation for ``$K $ satisfies the axiom scheme $DCF_{0,m}$") will be a field over which our sets are defined and $\mc U$ is a very saturated model of $DCF_{0,m}.$ This assumption amounts to saying that $\mc U$ is a universal domain in the sense of differential algebraic geometry. Thinking of varieties over $K$ as synonymous with their $\mc U$-points is a convenient (though not strictly necessary) approach, which we will take in this paper. We will usually be taking our varieties to be defined over $K$. This does not amount to any restriction (since $K$ is not fixed), but it is, again, a convenient approach for our purposes. All tuples in differential field extensions of $K$ can be assumed to come from $\mc U$. We will occasionally use $k$ for a differential field, which we \emph{do not} assume is differentially closed. 

As in the case of algebraically closed fields, $DCF_{0,m}$ has quantifier elimination, which gives a bijective correspondence between irreducible differential subvarieties of $\m A^n$, complete $n$-types, and prime differential ideals contained in $K \{ x_1 , \ldots , x_n \}$. We will make this explicit and fix notation next. Given an $n$-type over $K$, that is, $p \in S_n(K)$, we have a corresponding differential ideal via
$$p \mapsto I_p = \{ f \in K\{x_1, \ldots , x_n \} \, | \, f(\bar x )=0 \in p \}.$$
Throughout the paper, $S_n(K) $ denotes the space of complete $n$-types over $K$. We also denote the union of these spaces by $S(K) := \cup _{n \in \m N} S_n (K)$. 
Of course, the corresponding variety is simply the zero set of $I_p.$ We will use this correspondence implicitly throughout, including in the notation of \emph{Kolchin polynomials}, which we will define later. There is also a natural inclusion-reversing bijection between prime differential ideals in $K \{ x_1 , \ldots , x_n \}$ and differential subvarieties of $\m A^n$ over $K$. For a tuple $\bar a \in \mc U$, the type of $\bar a$ over $K$, denoted $tp(\bar a /K)$ is the collection of all first order formulas $\phi (\bar x)$ with parameters in $K$ which are satisfied by $\bar a$. 

We use the tools of stability theory throughout the paper. We will quickly discuss the basic definitions, but for the reader unfamiliar with the language of model theory this may be insufficient, so the reader is referred to \cite{Marker} and \cite{GST}. We work over some differential field $k \subseteq K$ (as per our above notation). Model theorists have developed a very general notion of \emph{independence} of two sets over a base field. This notion applies in great generality, and it is not feasible to describe its entire development here. 

We will, however, explain what independence means in the setting of differential algebraic geometry. Two tuples $\bar a$ and $\bar b$ are said to be \emph{independent over $k$} if $k \langle \bar a \rangle $ and $k\langle \bar b \rangle$ are linearly disjoint over $k^{alg}$. So, in the setting of differential algebra, the general model-theoretic notion has a familiar concrete manifestation. 
Alternatively, for the reader more familiar with model theory, this is equivalent to requiring that $RU( tp( \bar a/k)) = RU (tp( \bar a/ k\langle \bar b \rangle )).$ For the reader more familiar with differential algebra, this is equivalent to requiring equality of the Kolchin polynomials (which we will define in the coming paragraphs) $\chi_{\bar a/k}(t)=\chi_{\bar a/k\langle \bar b \rangle}(t).$ We use standard model-theoretic notation for independence, $\bar a \ind _k \bar b,$ reads ``$\bar a$ is independent from $\bar b$ over $k.$" When $\bar a$ is not independent from $\bar b$ over $k,$ we write $\bar a \nind _k \bar b.$ Sometimes in this case we say ``$\bar a$ forks with $\bar b$ over $k.$"  One may also replace $\bar a$ and $\bar b$ with infinite sets, and in that case, the sets are independent if and only if any finite tuples selected from them are independent.
Like many notions in model theory, independence is invariant up to interdefinability, so we could assume all of the sets which appear to be \emph{definably closed}. In differential fields, $\bar b$ is in the definable closure of $\bar a$ over $k$ if $\bar b \in k \langle \bar a \rangle.$ If $k$ is not specified, then one can assume $k= \m Q$. Two tuples are interdefinable if they are in each other's's definable closure, that is $\bar a $ and $\bar b$ are interdefinable when $\m Q \langle \bar a \rangle = \m Q \langle \bar b \rangle. $ 

Now, suppose that $A \subset B \subset K$ are differential fields. For a type $q \in S_n(B)$ extending a type $p \in S_n(A),$ we say $q$ is a \emph{nonforking extension} of $p$ if $c \models q$ implies $c \ind_A B.$ For the reader more familiar with differential algebra, this is equivalent to requiring equality of the Kolchin polynomials over the types, $\chi_p(t) =\chi_q(t).$ The Kolchin polynomial will be below. 

As we have explained above, model theorists often consider \emph{Lascar rank} on types. The Lascar rank of a type $p \in S(A)$, denoted $RU(p)$ is the ordinal (or infinity): 
$$ sup \{RU(q)+1 \, : \, \text{there is } B, \, A \subset B \subset \mc U, \, q \in S(B), \, p \subset q, \, q \text{ forks over } A \}$$ 
In an arbitrary theory, the Lascar rank might be infinite (larger than every ordinal), but differentially closed fields have the model-theoretic property of \emph{superstability}; the Lascar rank of a type over a differential field is less than $\omega ^ {m+1}$ where $m$ is the number of derivations. Calculating Lascar rank in differential fields is quite nontrivial; we discuss this point further later in the paper. 

Next, we develop Kolchin polynomials, which are essential to the rest of the paper. The main tool in the Berline-Lascar extension to superstable groups of Zilber's indecomposability theorem is the deepening of the topological property of connectivity of algebraic groups to a non-topological concept,, $\alpha$-connectivity, which is defined using the Lascar rank. Write the Lascar rank of a superstable group $G$ in Cantor normal form and suppose that the leading term is $\omega ^ \alpha d_ \alpha$. $G$ is $\alpha$-connected if for all proper intersections of definable subgroups, $H$, the Lascar rank of $G/H$ is at least $\omega ^ \alpha$. Independently, Cassidy and Singer deepened the topological notion of connectivity of algebraic groups when they defined a differential algebraic group of differential type $\tau$ to be \emph{strongly connected} if the differential type of $G/H$ is also $\tau$ for all proper differential algebraic subgroups $H$ of $G$. In the Cassidy-Singer formulation, there is no need to consider infinite intersections of definable (i.e., differential algebraic) subgroups; Notherianity of the Kolchin topology guarantees that any such infinite intersection will actually be given by a finite (sub-) intersection. Since differential type is defined in terms of the Kolchin polynomial, we now turn to the definition of this important algebraic notion of dimension. 

 Let $\Theta$ be the free commutative monoid generated by $\Delta = \{\delta_, \ldots , \delta _m \}.$   For $\theta \in \Theta,$ if $\theta=\delta_1^{\alpha_1} \ldots \delta_m^{\alpha_m},$ then $ord(\theta)=\alpha_1+ \ldots + \ldots + \alpha_m.$ The order gives a grading on the monoid $\Theta$. We let $\Theta (s)=\{ \theta \in \Theta: \, ord(\theta) \leq s \}.$ We should note that classical differential algebra often considers gradings which induce total orders on $\Theta.$ These gradings play a key role in axiomatizing differentially closed fields. Alternative orderings are also frequently considered. We will, however, not need to consider such fine gradings in our analysis. 

\begin{thm}\label{combinatorialprep}  \cite{KolchinDAAG}
Let $\eta = (\eta_1 , \ldots \eta_n ) $ be a finite family of elements in some extension of $k.$ There is a numerical polynomial $\chi_{\eta /k} (t)$ with the following properties. 

\begin{enumerate} 
\item For sufficiently large $t \in \m N,$ the transcendence degree of $k ((\theta \eta_j)_{\theta \in \Theta(t), \, 1 \leq j \leq n })$ over $k$ is equal to $\chi_{\eta /k} (t).$ 
\item $deg (\chi_{\eta /k }(t)) \leq m$  
\item One can write $$\chi_{\eta / k }(t) = \sum_{0 \leq i \leq m} a_i \binom{t+i}{i}$$ In this case, $a_m$ is the differential transcendence degree of $k \langle \eta \rangle$ over $k.$ 
\end{enumerate}
\end{thm}

When $ p \in S(k),$ and $ \bar a \models p$, we define $\chi_p (t) := \chi _{\bar a/k} (t).$ The Kolchin polynomial of a differential variety is a birational invariant, that is an invariant of the field of $k$-rational functions $k(\bar a)$ of $\bar a$. It is not a differential birational invariant, that is, it depends on the particular chosen generators of $k \langle \bar a \rangle.$ The leading coefficient and the degree of the Kolchin polynomial are differential birational invariants. We call the degree the \emph{differential type} or $\Delta$-type of $V$. Recalling the bijective correspondence between types, varieties and tuples in differential field extensions detailed above, let $V$ be the differential variety corresponding to the defining prime ideal of $\bar a$ over $k$. We will use the notation $\tau(V/k):=\tau (\bar a/k)$ for the the differential type. Noting the above correspondence between tuples in field extensions (realizations of types) and varieties, we will occasionally write $\tau(p)$ or $\tau(\bar a).$ Again, when we wish to emphasize the base set, we will write $\tau(\bar a/k).$ 

Since types come over a specified set, this is never necessary for types, however, we often wish to consider restrictions of types to smaller differential fields. Suppose $k_1 \leq k_2$ are differential fields and $p \in S (k_2)$. Then we write $p|_{k_1}$ for the restriction of $p$ to $k_1$. Taking $\bar a \models p$, we will often consider $\chi _{p|_{k_1}} (t) = \chi _{\bar a /k_1} (t)$, which is of course always greater than or equal to $\chi _p (t) = \chi _{\bar  a/k_2} (t).$ 

The leading coefficient of the Kolchin polynomial is called the \emph{typical differential dimension} or the typical $\Delta$-dimension. We will also write $a_\tau (\bar a)$ for a tuple of elements $a$ in a field extension. We will write $a_\tau(V)$ for the typical differential dimension of a variety $V.$ As above, we write $a_\tau (p)$ for a type $p.$ Similarly, we write $a_\tau (\bar a/k)$ and $a_\tau (V/k)$ when we wish to emphasize that the calculation is being done over $k.$  

The following is elementary to prove, see \cite{MPSarcs2008}. 
\begin{lem}\label{quot} For $\bar a, \bar b$ in a field extension of $k.$ $$\tau(\bar a,\bar b /k)= max \{ \tau (\bar a/k) , \tau(tp(\bar a/ k \langle \bar b \rangle)) \}.$$
If $\tau(\bar a) =\tau(tp(\bar b/ k \langle \bar a \rangle)),$ then 
$$a_\tau ({ (\bar a, \bar b )/k}) = a_\tau ({\bar a/k}) + a_\tau ({ \bar b/ k \langle \bar a \rangle })$$
If $\tau(\bar a /k) >\tau(\bar b/k \langle \bar a \rangle ),$ then
$$a_\tau ({ (\bar a,\bar b)/k}) = a_\tau ({\bar a /k})$$
\end{lem}

For additional results on the properties of differential type and typical differential dimension, see \cite{CassidySinger} and \cite{KolchinDAAG}. 

There are several natural perspectives from which one might consider the groups in this category. 

\begin{defn}  Let $X$ be a differential algebraic variety over $k$ (see \cite{KolchinDAG} for the definition). Let $g: X \times X \rightarrow X$ be a group operation and a $\Delta$-$k$-morphism, that is, a map which is locally differential rational over $k$. In this case, $X$ is called a \emph{$\Delta$-$k$-algebraic group.}
\end{defn} 

\begin{defn} Let $X \subseteq \mc U^n$ be a definable set and let $g: X \times X \rightarrow X$ be a group operation whose graph is a definable set. In this case, $X$ is called a \emph{$\Delta$-$k$-definable group.}
\end{defn} 

By a theorem of Pillay \cite{PillayDAG}, the two categories are equivalent. 

\begin{rem} For the reader not accustomed to the model theoretic approach, the previous definition seems problematic, since the underlying set on which any definable group is defined is a definable subset of affine space. Of course, Pillay's theorem does note imply that every differential algebraic group can be given an affine embedding. The group operation of a definable group is not necessarily continuous in the Kolchin topology. 
\end{rem} 

Let $T$ be any theory and $\mc M$ a saturated model of $T$. If $\phi$ is a formula over $ \mc M$, then $B$ is a \emph{canonical base} for $\phi$ if $B$ is definably closed and whenever $\sigma \in Aut (\mc M )$ fixes $\phi (\mc M)$ as a set, $\sigma \in Aut(\mc M / B)$. A theory \emph{eliminates imaginaries} if every formula has a canonical base. The theory $DCF_{0,m}$ has elimination of imaginaries \cite[for discussion in the ordinary case, see Marker's article]{MMP}. In differential fields, \emph{canonical bases} correspond to minimal differential fields of definition. In fact, we may assume that every canonical base is a differential field, because definable closure of a given set in models of $DCF_{0,m}$ is obtained by taking differential field generated by that set. Though canonical bases are differential fields, for many applications, it is necessary to take a more detailed view and consider the generators of the differential field (for instance, this is necessary anytime Kolchin polynomials are to be considered). 

\begin{prop} (See \cite[page 53]{MMP}) \label{Elim} Suppose $T$ eliminates imaginaries and has two constant symbols. Let $E$ be a definable equivalence relation on $\mc M ^n$. Then there is $m \in \m N$ and a definable function $f: \mc M^n \rightarrow \mc M^m$ such that $E ( \bar x , \bar y)$ iff $ f( \bar x ) = f( \bar y )$. 
\end{prop}

Let us describe how to use the model theoretic tools we have described above to assign a Kolchin polynomial to an arbitrary differential algebraic group. We do not claim that the process described here is in any way canonical; the issues of this paper have to do with invariants of the Kolchin polynomial which are coarse enough to no be affected by the particular choices we make. Given a differential algebraic group $G$ over $k$, there is a finite $k$-open covering of $G$, $U_1, \ldots , U_d$ such that $U_i$ is isomorphic via a differential birational map $f_i$ (defined over $k$) to a Kolchin-closed subset of $\mc U^n$. The elements of the cover of $G$ are equipped with differential rational transition maps, so $G$ may be regarded as a definable quotient $ \sim$ of $\prod_{i=1}^d \mc U^n$ (recall that for each $i=1, \ldots ,d$, $U_i \subseteq \mc U^n$). Regarding $G$ as a differential algebraic variety $G=\prod_{i=1}^d \mc U^n / \sim$. Applying the previous proposition gives a map $G \rightarrow \mc U^m$ for some $m$. The Kolchin polynomial we assign to $G$ is a the Kolchin polynomial of the closure of the image of this map. There are various other manners in which one might attempt to assign a Kolchin polynomial to arbitrary differential algebraic groups; some methods might give a more canonical assignment, but the assignment described here suffices for the purposes of this paper. One might imagine that elimination of imaginaries also allows us to assign a Kolchin polynomial to $G/H$ when $H$ is a normal $k$-definable subgroup (or the left coset space of $H$ when $H$ is not normal) with no further ambiguities. However, the Kolchin polynomial calculations depend on the particular definable embedding given to a quotient. For the purposes of our paper, this ambiguity will not be an important issue, because we are dealing with invariants of the Kolchin polynomial which are differential birational invariants. 

\section{Stabilizers and fields of definition}
In this section, we develop the notion of stabilizers of types in the differential algebraic setting and give some new bounds on Kolchin polynomials. The basic setup is that of superstable groups \cite{Poizat}, but the proofs of the results are easier or sometimes give more information in the setting of differential algebraic groups. In this section, $G$ will be a differential algebraic group, that is, a definable group over some differential field, $k$. For discussion of the category of differential algebraic groups, see \cite{PillayDAG}. Again, recall that we assume $K$ is a small differentially closed field containing $k.$ For convenience of notation, when writing elements of definable groups, we will write $g \in G$ even though $g$ may be a tuple when thinking of $G$ as a definable (i.e., embedded) set. 

\begin{defn} Let $p(x) \in S (K)$ be a complete type containing the formula $x \in G.$ All of the complete types we deal with will contain this formula. Define 
$$stab_G (p) = \{ a \in G \, | \, \text{ if } b \models p, \, b \ind_K a, \text{ then } ab \models p \}.$$
\end{defn}

\begin{exam} Consider $G= Z(x'')$ in a model of the theory $DCF_{0,1}.$ $G$ is a subgroup of $\m G_a.$ In this case, we will write the group operation additively. Consider the generic type $p \in S_1(K)$ of $G.$ That is, $p \models x''=0,$ but not any lower order differential equations defined over $K.$  Then by definition, the stabilizer of $p$ in $G$ is the definable subgroup of elements $g \in G$ for which if $b \models p$ and $b \ind_K g$ then $g+b \models p.$ The independence of $g$ and $b$ ensures that $g+b$ satisfies no differential equations of order 1 (even over $K \langle g \rangle$). More generally, the stabilizer of the generic type of a connected $\omega$-stable group must be the entire group (for details, see chapter 2 of \cite{Poizat}). In this paper, we will be considering stabilizers of non-generic types of the differential algebraic group $G.$ In certain cases, we try to control these subgroups to get definability results.
\end{exam}

The next two results are standard for $\omega$-stable groups \cite{Poizat} (only the first needs $\omega$-stability; for the other, superstability suffices). 

\begin{lem} $stab_G(p)$ is definable. 
\end{lem}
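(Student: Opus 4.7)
The plan is to reduce $stab_G(p)$ to $stab_G(p,f)$ for a single well-chosen formula $f$, then invoke the previous lemma.

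Since $DCF_{0,m}$ is $\omega$-stable and $K$ is a model, the complete type $p \in S(K)$ is stationary. I would pick a formula $\phi(x) \in p$, with parameters in $K$, satisfying $MR(\phi) = MR(p)$ and of Morley multiplicity $1$, so that $p$ is the unique complete type over $K$ extending $\phi$ with maximal Morley rank. This is where $\omega$-stability enters essentially: one needs $p$ to have an ordinal-valued Morley rank and to be pinned down by a single formula up to max-rank completions.

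Next, observe that for any $a \in G$, left multiplication by $a$ is a $K$-definable bijection $G \to G$, hence preserves Morley rank of formulas and types. In particular $MR(ap) = MR(p) = MR(\phi)$, so $ap = p$ if and only if $\phi \in ap$ (since $p$ is the only max-rank completion of $\phi$).

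Now set $f(v,w) := \phi(v)$, treating $w$ as a dummy variable. I claim $stab_G(p) = stab_G(p,f)$. The inclusion $\subseteq$ is immediate from the definitions. For $\supseteq$, if $a \in stab_G(p,f)$, then taking $b$ to be the identity element of $G$ in the defining condition yields
\[
\phi(x) \in p \;\Longrightarrow\; \phi(x) \in ap,
\]
and since $\phi(x) \in p$ by construction, we get $\phi(x) \in ap$; combined with the preceding paragraph this gives $ap = p$, i.e.\ $a \in stab_G(p)$. The previous lemma now asserts that $stab_G(p,f) = stab_G(p)$ is definable.

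The main obstacle is the first step: producing $\phi$ of the required form. For a general superstable theory one only has $U$-rank at one's disposal, which does not collapse the intersection $\bigcap_f stab_G(p,f)$ to a single $f$ in this way; Morley rank together with stationarity of types over models is what makes the reduction to one formula possible, explaining the remark in the paper that this is the one statement requiring $\omega$-stability rather than merely superstability.
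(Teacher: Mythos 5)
Your argument is correct, but it is not the route the paper (via Poizat) has in mind. The standard proof writes $stab_G(p)=\bigcap_f stab_G(p,f)$ as an intersection of the definable subgroups from the previous lemma and then invokes the descending chain condition on definable subgroups of an $\omega$-stable group to conclude that this intersection equals a finite subintersection, hence is definable; there $\omega$-stability enters through the chain condition. You instead use $\omega$-stability to produce a single formula $\phi\in p$ with $MR(\phi)=MR(p)$ and Morley degree $1$ (which is available because $p$ is a type over a model, so the finitely many global rank-$MR(\phi)$ completions of $\phi$ are definable over $K$ and can be separated by $K$-formulas), note that $MR(ap)=MR(p)$ since left translation is a definable bijection and $ap$ is computed via the nonforking extension of $p$ to $Ka$, and conclude $ap=p\iff\phi\in ap$, collapsing the whole intersection to the single set $stab_G(p,f)$ with $f(v,w):=\phi(v)$. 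Both inclusions you check are right (the specialization $b=1_G$ in the definition of $stab_G(p,f)$ does exactly what you want), so the previous lemma finishes the job. What your version buys is a one-formula description of the stabilizer rather than an unspecified finite conjunction; what the chain-condition version buys is that it works verbatim for the $\phi$-stabilizers of any stable group once one knows DCC holds, and it is the argument the reader will find in \cite{Poizat}. Either way the essential use of $\omega$-stability (as opposed to mere superstability) is correctly located.
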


\begin{lem} $RU(p) \geq RU(stab_G(p)).$ 
\end{lem}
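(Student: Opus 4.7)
The plan is the standard translation argument from stable group theory. Set $H = stab_G(p)$, fix a realization $a \models p$ in $\mathcal U$, and—using $\omega$-stability and the definability of $H$ from the preceding lemma—choose $h \in H$ generic over $K \cup \{a\}$ in the $RU$-sense, i.e. $RU(h / K \cup \{a\}) = RU(H)$.

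The key step is to show $ha \models p$. Interpreting the stabilizer via the global nonforking extension $p^{*}$ of $p$, the defining condition $h p^{*} = p^{*}$ combined with the independence $h \ind_K a$ (forced by genericity of $h$ over $a$) gives $tp(ha / K) = tp(a / K) = p$.

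Granted this, observe that $h = (ha) \cdot a^{-1} \in dcl(K, a, ha)$, so
$$RU(H) \;=\; RU(h / K, a) \;\leq\; RU(ha / K, a) \;\leq\; RU(ha / K) \;=\; RU(p),$$
where the two inequalities are monotonicity of $RU$ under definable closure and under enlargement of the parameter set, and the last equality uses $ha \models p$.

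The main obstacle is the middle paragraph: one must reconcile the pointwise formulation $\{a : ap = p\}$ with the statement that a \emph{generic} element of $H$ actually carries one realization of $p$ to another realization of $p$. This is the usual stabilizer/generic calculus for stable groups, applicable here because $DCF_{0,m}$ is $\omega$-stable (and, as the author notes, only superstability is needed for the rank comparison itself).
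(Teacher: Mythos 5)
Your argument is correct and is precisely the standard stabilizer-translation argument from $\omega$-stable group theory that the paper itself invokes by citing Poizat without giving a proof (the key points--passing to the global nonforking extension $p^{*}$ to make sense of $hp=p$, using stationarity over the model $K$ to get $ha\models p$ from $h\ind_K a$, and then the Lascar rank monotonicity under definable closure--are all handled properly). Nothing to add.
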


The first task is to put the last lemma into the differential algebraic context, with differential type and typical differential dimension playing the role that Lascar rank plays in the model theoretic context. The next two lemmas are preparation for this result. 

Again, we are working in some fixed differential algebraic group $G$; \emph{all types, elements and tuples in the following lemma are assumed to be in the differential algebraic group in which we are working}. Unless specially noted, multiplication of two elements or above as in the case of a type occurs with \emph{respect to the group operation in} $G.$ 

\begin{lem}\label{lowerbound} Suppose that $c \ind _A b,$ where we assume that $A$ is a differential field which contains the canonical base of the differential algebraic group $G.$ Then $\tau(c /A) \leq \tau(cb/A)$ and in the case of equality, $a_\tau ({(c/A)}) \leq a_\tau ({(cb/A)}).$
\end{lem}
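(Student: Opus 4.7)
The plan is to prove the lemma by inserting the enriched base $A \cup \{b\}$ between $A$ and $cb$, and establishing the chain
\[
\tau(c/A) \;=\; \tau(c/A \cup \{b\}) \;=\; \tau(cb/A \cup \{b\}) \;\leq\; \tau(cb/A),
\]
with the parallel relations holding for $\alpha$ when the $\tau$'s are equal.

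For the first equality, the hypothesis $c \ind_A b$ in $DCF_{0,m}$ is equivalent to algebraic disjointness of the differential fields $K\langle A, c\rangle$ and $K\langle A, b\rangle$ over $K\langle A\rangle$. Applied to the finite subset $\Theta(s) c$ of $K\langle A, c\rangle$, this disjointness gives
\[
\operatorname{trdeg}_{K\langle A, b\rangle}\bigl(K\langle A, b\rangle(\Theta(s) c)\bigr) \;=\; \operatorname{trdeg}_{K\langle A\rangle}\bigl(K\langle A\rangle(\Theta(s) c)\bigr)
\]
for every $s$, so the entire Kolchin polynomial of $c$ is preserved upon adjoining $b$ to the base, and in particular so are $\tau$ and $\alpha$.

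For the second equality, we use that $A$ contains a canonical base of $G$, so multiplication and inversion on $G$ are differentially rational maps defined over $K\langle A\rangle$. Right multiplication by $b$ is then a $\Delta$-birational isomorphism defined over $K\langle A, b\rangle$, sending $c$ to $cb$ with inverse right multiplication by $b^{-1}$; in particular $K\langle A, b, c\rangle = K\langle A, b, cb\rangle$. This is the most delicate step: the Kolchin polynomials $\omega_{c/A\cup\{b\}}$ and $\omega_{cb/A\cup\{b\}}$ may well differ in their lower-order terms, since a single derivative $\theta(cb)$ can involve derivatives of $c$ of order larger than the order of $\theta$ itself. However, because $\tau$ and $\alpha$ are $\Delta$-birational invariants, the two polynomials must still agree in degree and in leading coefficient, which is all we need.

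For the final inequality, enlarging the parameter set from $A$ to $A \cup \{b\}$ can only decrease the transcendence degree at each truncation level, so $\omega_{cb/A\cup\{b\}}(s) \leq \omega_{cb/A}(s)$ pointwise. This gives $\tau(cb/A \cup \{b\}) \leq \tau(cb/A)$, with the analogous inequality on $\alpha$ in the case of equality. Chaining the three steps yields the assertion of the lemma. The main obstacle is the middle equality, where one must carefully distinguish the $\Delta$-birational invariants $\tau$ and $\alpha$ from the non-invariant Kolchin polynomial itself.
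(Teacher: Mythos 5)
Your proposal is correct and follows essentially the same route as the paper: the paper also chains $\tau(c/A)=\tau(c/A\cup\{b\})$ (via McGrail's characterization of nonforking, which you spell out as algebraic disjointness preserving the truncated transcendence degrees), $\tau(c/A\cup\{b\})=\tau(cb/A\cup\{b\})$ (interdefinability over $A\cup\{b\}$ plus $\Delta$-birational invariance of $\tau$ and $\alpha$), and $\tau(cb/A\cup\{b\})\leq\tau(cb/A)$ (base enlargement). Your cautionary note that the full Kolchin polynomial need not be preserved under the middle step is exactly the point the paper makes in the remark following the lemma.
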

\begin{proof} $c$ is definable over $A   \langle b, cb \rangle$ and $cb$ is definable over $A \langle c,b \rangle.$ Definable closure is the same as differential field closure in our setting. So, $$\tau( c / A \langle b \rangle ) = \tau ( cb / A \langle b \rangle  ) \leq \tau(cb/A) $$ and in the case of equality in the previous line, $$a_\tau ({( c / A \langle b \rangle )})=a_\tau ({ ( cb / A \langle b \rangle )}) \leq  a_\tau ({ ( cb / A )}).$$ 

But, we know that $$\chi_{c/A \langle b \rangle}(t) =\chi_{c /A}(t)$$ by the characterization of forking in partial differential fields \cite[see also the discussion in the introduction of this paper]{McGrail}, so the lemma is established. 
\end{proof}

\begin{rem}
The fact that the Kolchin polynomial is not a differential rational invariant affects all $\Delta$-groups whose differential rational group law is not rational. So, for instance, left multiplication might not preserve the Kolchin polynomial of a group element. While it is true that differential algebraic groups may be embedded in algebraic groups \cite{PillayDAG}, relieving this potential problem, such an embedding (starting from our given differential algebraic group) need not preserve the Kolchin polynomial. One could state the above result with Kolchin polynomials, but only after assuming a specific embedding into an algebraic group.

Alternatively, a result of \cite{SitWell} shows that the Kolchin polynomials are well-ordered by eventual domination. For a given differential algebraic group, one could consider the set of Kolchin polynomials of groups isomorphic to $G.$ Selecting the minimal Kolchin polynomial from this set would give every differential algebraic group a canonical polynomial. Analysis of this polynomial from a model theoretic perspective seems might be possible since it is an invariant of a tuple up to interdefinability. We avoid both this approach and the one mentioned in the previous paragraph, because our results usually only require analysis of the differential type and typical differential dimension; when lower degree terms of the Kolchin polynomial are considered, more care is required (as the reader will see later in this section).
\end{rem}

The following lemma has been stated before \cite{PillayDAG} \cite{BenoistFrench} (though, to the author's knowledge, not precisely in this form). For instance, the proof in \cite{BenoistFrench} is for the case groups definable in ordinary differential fields. The proof in the partial differential version is not any harder, but we include it for convenience. 

\begin{lem}\label{Groups}  Suppose that G is an connected differential algebraic group. Then a type is generic in the sense of the Kolchin topology if and only if it is of maximal Lascar rank. 
\end{lem}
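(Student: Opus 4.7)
Plan: Let $p_0$ denote the Kolchin-generic type of $G$ over $K$, which exists and is unique because $G$ is Kolchin-irreducible. I would first observe that $G$ is connected in the $\omega$-stable group sense: any proper definable subgroup $H \le G$ of finite index would be Kolchin-closed, and its finitely many cosets would decompose $G$ as a proper finite union of Kolchin-closed subsets, contradicting irreducibility. Hence $G^{0}=G$.

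For the direction $(\Rightarrow)$, I would suppose $p=p_{0}$ and argue that left-multiplication by any $g\in G$ is a Kolchin-regular isomorphism of $G$ onto itself, hence sends the unique Kolchin-generic type of $G$ to itself. This gives $g\,p_{0}=p_{0}$ for every $g\in G$, so $\mathrm{stab}_{G}(p_{0})=G$. Applying the previously established inequality $RU(p)\ge RU(\mathrm{stab}_{G}(p))$ then yields $RU(p_{0})\ge RU(G)$, and the reverse inequality is trivial; hence $p_{0}$ attains the maximum Lascar rank.

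For the direction $(\Leftarrow)$, I would invoke the standard theory of $\omega$-stable groups (as in Poizat, already cited earlier in the paper) to conclude that a type of maximal Lascar rank on a definable group is group-generic, so $\mathrm{stab}_{G}(p)=G^{0}=G$. Letting $V$ be the Kolchin closure of the locus of $p$, the translation invariance $gp=p$ for every $g\in G$ forces $gV=V$, making $V$ a nonempty $G$-translation-invariant Kolchin-closed subset of the connected group $G$. Thus $V=G$, so $p=p_{0}$ is Kolchin-generic. The main obstacle is the classical implication that maximal Lascar rank forces $\mathrm{stab}_{G}(p)=G^{0}$: while standard for superstable groups, a self-contained proof would require some care, since Lascar rank can be infinite and is not strictly additive in partial differential fields, so one would carry it out by choosing $a\models p$ and a Kolchin-generic $g\models p_{0}$ independent from $a$ and analyzing the type of $(ga,a)\leftrightarrow (g,a)$.
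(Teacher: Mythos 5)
Your proof is correct, but it takes a genuinely different route from the one in the paper. The paper works directly with the finite-cover characterization of group genericity: for one direction, an $RU$-generic type that is not topologically generic would contain a proper Kolchin-closed condition $\phi$, finitely many translates of $\phi$ would cover $G$, and since translation is a Kolchin homeomorphism this writes $G$ as a finite union of proper closed subsets, contradicting irreducibility; for the other, every closed condition in a topologically generic type cuts out all of $G$, and any non-generic inequation $P(x)\neq 0$ would force $Z(P)\cap G$ to be group generic, giving the same contradiction. You instead route both directions through the stabilizer machinery of the preceding section: irreducibility gives a unique Kolchin-generic type $p_0$, translation invariance gives $stab_G(p_0)=G$, and the inequality $RU(p)\geq RU(stab_G(p))$ already stated in the paper yields maximality of $RU(p_0)$; conversely, maximal Lascar rank gives $stab_G(\bar p)=G^0=G$, so the Kolchin closure of the locus of $p$ is translation-invariant and hence equals $G$. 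Both arguments lean on the same classical input (Poizat: $RU$-generic equals group generic in a superstable group), but yours additionally uses that group-generic types have stabilizer $G^0$ and that $G^0=G$; for the latter you should justify or cite the standard fact that definable (in particular finite-index) subgroups of a differential algebraic group are Kolchin closed, e.g. via \cite{PillayDAG}, since that is the one step you assert without argument. The paper's proof is the more elementary and self-contained of the two, needing only that translates of closed sets are closed; yours has the advantage of reusing the stabilizer lemmas already established and of making explicit the connectedness $G^0=G$ that the paper leaves implicit, which fits naturally with the stabilizer computations used later in Proposition \ref{bigstab} and Theorem \ref{indecomposability}.
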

\begin{proof} We will refer to types which are generic in the Kolchin topology (in the sense that there is a realization of the type of Kolchin polynomial equal to the differential algebraic group) as a \emph{topological generic}. We will refer to the types of maximal Lascar rank as \emph{RU-generics}. We will refer to types for which any neighborhood covers the group $G$ by finitely many left translates as \emph{group generics}. In any superstable group, being group generic is equivalent to being RU-generic \cite{Poizat}.

Suppose that $p(x)$ is RU-generic but not topological generic.  Then finitely many left translates of any formula in $p(x)$ cover the group, but $p(x)$ is not topological generic, so the type is contained in a proper Kolchin closed subset of $G$. Take the formula witnessing this, $\phi(x)$. Now, finitely many left translates of $\phi(x)$ cover the group $G,$ and each of these is clearly closed in the Kolchin topology (if $a$ is a topological generic in $\phi(x)$ then $g\phi(x)$ is simply the zero set of the ideal of differential polynomials vanishing at $ag$). But, this is a problem. Now $G$ is the finite union of proper closed subsets.

Now, assume that $p(x)$ is a type such that any realization $a$ is topological generic. Then take any differential polynomial $P(x)$ vanishing at $a.$ As $a$ is topological generic, $P(x)$ vanishes everywhere in $G.$  So, by quantifier elimination, then only possible non-group generic formula in $p(x)$ is the negation of a differential polynomial equality. Suppose that $P(x)\neq 0$ is not group generic. Then $P(x)=0$ is group generic, so finitely many translates cover $G,$ which is again a contradiction if $Z(P) \cap G$ is a proper closed subset of $G$.  Thus $P(x)\neq 0$ is group generic. 
\end{proof}

From now on, we will simply refer to these types as generics. Note that this argument also shows that for a differential algebraic group, irreducibility in the Kolchin topology implies that the Morley degree is one \cite[for the definitions of Morley degree and rank]{MMP}. In a general differential algebraic variety (with no group structure), there are examples in which the topological generics are disjoint from the U-generics and Morley degree (and even Morley rank) of a definable (constructible) set is not preserved by taking the closure of the set in the Kolchin topology \cite{Fgenerics}. Also note that the previous lemma also holds for definable principal homogeneous spaces of a differential algebraic group \cite{PillayDAG}. 

\begin{prop}\label{smallstab} For any complete type which includes the formula ``$x \in G$", $\tau(stab_G(p(x))) \leq \tau(p(x))$ and in the case of equality, $a_\tau ( {stab_G(p(x))}) \leq a_\tau ({  p(x)}).$
\end{prop}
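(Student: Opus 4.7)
The plan is to transport the standard argument of Poizat for $\omega$-stable groups into the differential-algebraic setting, replacing the inequality $RU(p) \geq RU(stab_G(p))$ by Lemma \ref{lowerbound}. Write $H = stab_G(p)$, which is a definable subgroup of $G$ by the previous lemma. By saturation of $\mc U$ I would pick a generic $g$ of $H$ over $K$ lying on an irreducible component of $H$ of maximal differential type, and then pick a realization $a$ of $p$ with $g \ind_K a$.

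The first step is to show that $ga$ is itself a realization of $p$ over $K$. Since $g \in H$, the left translate $g \cdot p$ equals $p$, and this translate is by definition the type over $K$ of $gb$ for any $b \models p$ independent from $g$ over $K$. The choice $g \ind_K a$ is exactly this hypothesis, so $ga \models p$. This is the only place where some care is required; everything else will be formal.

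The second step is a direct application of Lemma \ref{lowerbound} with $c = g$, $b = a$ and $A = K$: the hypothesis $g \ind_K a$ holds by construction, and $K$ contains the canonical base of $G$ because $G$ is defined over $K$. The lemma yields $\tau(g/K) \leq \tau(ga/K)$, with $\alpha_{g/K} \leq \alpha_{ga/K}$ in the case of equality of differential types. The preceding lemma identifying topological and $RU$-generics gives $\tau(g/K) = \tau(H)$ and $\alpha_{g/K} = \alpha_H$, while $ga \models p$ gives $\tau(ga/K) = \tau(p)$ and $\alpha_{ga/K} = \alpha_p$; the two inequalities assemble into the statement of the proposition.

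The only substantive obstacle is the translation step $ga \models p$, since it requires the usual interpretation of $g \cdot p$ as a type over $K$ via nonforking. Once that is granted, the proposition is essentially Lemma \ref{lowerbound} in disguise, which is the whole reason that Lemma \ref{lowerbound} was proved in the form of a dichotomy on $\tau$ and $\alpha$ rather than on Lascar rank.
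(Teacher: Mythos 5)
Your proof is correct and takes essentially the same route as the paper's: choose a generic of $stab_G(p)$ and a realization of $p$ independent from it over $K$, note that the product again realizes $p$ via the nonforking interpretation of the translate, and apply Lemma \ref{lowerbound}. If anything, your order of multiplication ($g$ from the stabilizer acting on the left) matches the paper's definition of $stab_G(p)$ as a left stabilizer more directly than the product written in the paper's own argument.
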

\begin{proof} Suppose that $s(x)$ is a generic type of the stabilizer of $p(x).$  Take $b \models p(x)$ and $c \models s(x)$ such that $b \ind _K c.$ $$\tau(c /K) \leq \tau(b c /K)$$ and if equality holds, then $$a_ \tau ({c /K}) \leq a_ \tau ({b c /K})$$  by Lemma \ref{lowerbound}. One needs only to argue that $tp( b c /K )=p(x).$ This follows from the definition of $stab_G(p(x)).$
\end{proof}

The following Lemma has not appeared in its precise stated form, but its proof has appeared \cite[see the proof of Proposition 1 on page 252]{SitPoly} \footnote{Thanks to the referee for pointing out this reference and the proof therein.}. This is the differential algebraic analogue (i.e., for the Kolchin polynomial) of the lower bound of the Lascar inequality. 

\begin{lem} \label{Sitlemma} (Sit's Lemma) Let $A$ be a differential field. Consider the following extensions of differential fields: 
$$
\xymatrix{
  A \langle  \bar a , \bar b \rangle \ar@{-}[d]  \\
  A \langle \bar b \rangle \ar@{-}[d] \\
  A
}
$$
Then $\chi _{\bar a / A \langle \bar b \rangle } (t) + \chi _ {\bar b /A } (t) \leq \chi _{(\bar a , \bar b) /A } (t).$
\end{lem}

\begin{rem} It is not even clear what analogue of the upper bound of the Lascar inequality \emph{should} be in this context. 
\end{rem} 

In the next theorem, we will discuss tuples which generate differential fields over which a given type does not fork (what might be called a relative canonical base or a relative field of definition). We remind the reader that Sit proved the Kolchin polynomials are ordered by eventual domination \cite{SitWell}. In what follows, we will write the Kolchin polynomial of a type $p$ in the following canonical form $$\chi_{p }(t) = \sum_{0 \leq i \leq m} a_i \binom{t+i}{i}.$$	
The following definition will be useful for the statements of the remaining results in the section. 

\begin{defn} Let $p \in S_N(k_1)$ and $q \in S_N(k_2)$ where $k_1 \leq k_2$ are differential fields and $q$ is an extension of $p$. Let $\chi_{p }(t) = \sum_{0 \leq i \leq m} a_i \binom{t+i}{i}$ and $\chi_{q }(t) = \sum_{0 \leq i \leq m} d_i \binom{t+i}{i}.$ We say that $p$ and $q$ are \emph{$n$-equivalent} if $a_i=b_i$ for all $i \geq n$. In this case, we also write $\chi_p (t) \equiv _n \chi _q (t)$. 
\end{defn}
 
\begin{rem} This new notion, $n$-equivalence, is a measure of forking; for instance, $0$-equivalence is equivalent to nonforking. $1$-equivalence of complete $n$-types means that the forking only changes the constant term of the Kolchin polynomial. The notion is only meaningful for $n \leq m$. 

We consider a simple example with $m=1$. Let $a ,b \in \mc U$ considered over the field $\m Q$. Suppose $\delta^2 (b) =0$ and $a$ is generic over $\m Q$. Now, consider the differential field $\m Q (c)$ where $\delta (b) = c$ (note that $\delta (c) =0$). Then $tp (a,b / \m Q (c) )$ is a forking extension of $tp (a,b / \m Q)$. The types are $1$-equivalent in this case, because all of the forking only affects the constant term of the Kolchin polynomial. The next theorem in part justifies the importance of this measure. 
\end{rem} 

In the proof of the following result, we will use the notion of a \emph{Morley sequence} in a type $p \in S_N(K)$, which we will briefly explain here \cite[for complete details]{Marker}. A sequence $\bar a_i \models p$ such that $\bar a_i \ind _K K \langle a_j \rangle _{j \neq i }$ is called a Morley sequence in the type $p$. 

\begin{thm}\label{first}  Suppose that $p(x) \in S_N(K)$.  Then, suppose, for some differential subfield $A \subseteq K$ and $n \in \m N$ that $\chi_{p|_A}(t) \equiv _n \chi_p(t)$. Then there is a tuple $\bar c \in K$ such that $\chi_p(t) =\chi_{p | _{ A \langle \bar c \rangle}}(t)$ and $\chi_{\bar c /A} (t)$ has degree less than $n$.
\end{thm}

\begin{proof} 
Let $\langle \bar b_k \rangle _{ k \in \m N }$ be a Morley sequence over $K$ in the type of $p.$ By the characterization of forking in $DCF_{0,m}$ this simply means that for all $j \in \m N$,  $$\chi_p(t)=\chi_{\bar b_j /K}(t)=\chi_{\bar b_j / K \langle \bar b_0, \ldots, \bar b_{j-1} \rangle}(t)$$
We do not know, however, that the same holds over the differential subfield $A \subseteq K.$ The sequence is still necessarily $A$-indiscernible, that is $tp(\bar b_j/A)$ does not depend on $j$. It is not necessarily $A$-independent (that is, it may be that $\bar b_i \nind _A \bar  b_j$), since $A$ may not contain a differential field of definition for the type $p$. In general, we simply know that $\chi_{\bar b_j / A  \langle \bar  b_0 , \ldots , \bar  b_{j-1} \rangle }$ is a decreasing sequence of polynomials, again, ordered by eventual domination. By the well-orderedness of Kolchin polynomials \cite{SitWell} we know that the sequence is eventually constant. Alternatively, this fact can be seen by noting the superstability of $DCF_{0,m}$ and the fact that decreases in Kolchin polynomial correspond to forking extensions. So, for the rest of the proof, we fix a $k$ such that if $n \geq k,$ the sequence is constant. That is, above $k,$ we know that we have a Morley sequence over $A  \langle b_0, \ldots ,b_{k-1} \rangle$ in the type of $p.$ Now, fix a model $K' \models DCF_{0,m}$ with $K'$ containing $K$ and $\{b_0 , \ldots , b_{k-1} \}$. We let $p'$ be the (unique) nonforking extension of $p$ to $K'.$ 

We can get elements $\bar c \subseteq acl^{eq}(A \langle b_0, \ldots , b_{k-1} \rangle)$ such that $p'$ does not fork over $\bar c.$ In fact, by \cite{Shelah} (page 132) and the fact that $DCF_{0,m}$ eliminates imaginaries, we can assume that $\bar c \in K.$ In differentially closed fields, the algebraic closure of a set $E$ over $A$ is equal to the field theoretic algebraic closure of the differential field generated by $E$ over $A$. If $\bar c \in acl^{fields} (A \langle E \rangle )$, it follows that $ \bar c \in acl^{fields} (A (\Theta (r) (E) ) )$ for some $r \in \m N$ (note that by $ \Theta (r) E,$ we mean $\{\theta (e) \, | \, \theta \in \Theta (r), \, e \in E \}$). Take such a minimal $r$. Now, we will replace each $b_i$ by $ \Theta (r) b_i$ for an $r$ satisfying the above requirement (order $\Theta (r) b_i$ as a tuple via the standard ordering on differential monomials if you like). Note that this has no effect indiscernibility (though the length of the indiscernible tuples has changed) nor on the value of $k$ from the above paragraph. This tuple $\Theta (r) b_i$ is interdefinable with $b_i$ and forking is invariant up to interalgebraicity. Suppose that $b_0 \models p$; then replacing $p$ with $\Theta (r) p :=tp ( \Theta(r) b_0/K)$ preserves the assumption of $n$-equivalence because the Kolchin polynomial of $\Theta(r) p$ is $\chi_p(t+r)$ and the Kolchin polynomial of $\Theta (r) p |_A$ is equal to $\chi _{p|_A} (t+r)$. For convenience, we will now drop the notation $\Theta (r) p$, but we are now assuming that $ \bar c \in acl^{fields} (b_0 , \ldots b_{k-1} /A)$. This point will be rather important in what follows. 

By assumption, $\chi_{p|_A}(t) \equiv _n \chi_p(t),$ so the polynomial $h_1(t): =\chi_{p|_A}(t) - \chi_p(t)$ is of degree less than $n$. Note that since we may assume $\chi_{p|_A}(t)$ is greater than $\chi_p(t)$, the leading coefficient is positive; this follows by noting the result of \cite{SitWell} which says that when written as a sum of binomials, the eventual domination ordering of Kolchin polynomials is the same as lexicographic ordering of the tuples of coefficients. 

By construction $\langle b_i \rangle$ was an indiscernible sequence, so if we define $\bar b := (b_0, \ldots ,b_{k-1} ),$ then because $ \bar c$ the canonical base for $p$ over $A$ and $\bar b$ comes from an initial segment of an independent indiscernible sequence in $p$, \begin{eqnarray} \label{ineq} k \cdot \chi_p (t) = \chi_{\bar b / A \langle \bar c \rangle }(t) = \chi_{\bar b /K}(t) \end{eqnarray}

By assumption, $\bar c \in acl^{fields}(A  (\bar  b ) )$. Since algebraic elements do not affect Kolchin polynomials, $\chi_{\bar b/A}(t) =\chi_{\bar b , \bar c /A}(t).$  

Then by Sit's Lemma \ref{Sitlemma}, \begin{eqnarray}\label{finalineq} \chi_{\bar b/A \langle \bar c \rangle }(t) + \chi_{\bar c/ A}(t) \leq \chi_{\bar c, \bar b/A}(t)= \chi_{\bar b/A}(t).\end{eqnarray} 

We claim that 
\begin{eqnarray}\label{laterineq} \chi_{\bar b/A}(t) \leq k \cdot \chi_{p|_A}(t). \end{eqnarray} This follows from noting that if $ \bar b$ was an independent set of realizations of $\chi_{p|_A}(t)$, then the Kolchin polynomial of $\bar b$ over $A$ would be equal to $k \cdot \chi_{p|_A}(t)$. If the $b_i \in \bar b$ are not an independent over $A$, then the Kolchin polynomial for $ \bar b$ can only be lower. So, by \ref{ineq}, \ref{finalineq} and \ref{laterineq}, we see
\begin{eqnarray*} k \cdot \chi_p (t) + \chi_{\bar c/ A}(t) & \leq & k \cdot \chi_{p|_A}(t) \\
 \chi_{\bar c/ A}(t) & \leq & k \cdot \chi_{p|_A}(t) -k \cdot \chi_p (t) = k \cdot h_1 (t)
\end{eqnarray*} 
But, the polynomial $h_1$ is of degree less than $n$. 

\end{proof}

\begin{rem}
Let us briefly discuss the significance of the previous result. The theorem says that any forking which only affects the terms of the Kolchin polynomial of degree below $n$ can be achieved by adding a tuple $\bar c$ to the canonical base of the restricted type which itself has Kolchin polynomial of degree less than $n$. 

Let $a \models p$. Then we have the following diagram:

$$
\xymatrix{
& & K \langle a \rangle \ar@{--}[rd]^{\chi_{a/K}(t)} \ar@{-}[ld]  & \\
&  \m Q \langle A, \bar c \rangle \langle a \rangle \ar@{~}[rd]^{\chi_{a/\m Q \langle A, \bar c \rangle}(t)} \ar@{-}[ld] & & \, \, \, \, \, \, K \ar@{-}[ld] \\
 \m Q \langle A \rangle \langle a \rangle \ar@{--}[rd]^{\chi _{a /\m Q \langle A \rangle} (t) }  &  & \m Q \langle A, \bar c \rangle \ar@{.}[ld]^{\chi _{\bar c /\m Q \langle A \rangle} (t) < \binom{t+n}{n}} &\\
&  \m Q \langle A \rangle & &
}
$$
\end{rem}

The next lemma appears in \cite{BerlineLascar}:

\begin{lem}\label{second} Let $p(x) \in S(K)$ with $``x \in G" \in p(x).$  Suppose $p$ does not fork over the empty set. Let $b$ be an element of $G (K).$ Let $A \subset K$. If $\tilde b = b \, (mod \, stab_G(p))$ is not algebraic over $A,$  then $bp$ forks over $A.$ 
\end{lem}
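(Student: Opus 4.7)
The plan is to argue by contrapositive: assume $bp$ does not fork over $A$, and conclude that $\tilde{b}$ is algebraic over $A$. The key tool is the canonical base of a complete type, which is available since $DCF_{0,m}$ is $\omega$-stable with elimination of imaginaries. I would invoke the standard fact that for any complete type $q$, there is a tuple $\mathrm{Cb}(q) \in \mc U^{eq}$, unique up to interdefinability, such that (i) $q$ does not fork over a set $B$ if and only if $\mathrm{Cb}(q) \subseteq \mathrm{acl}^{eq}(B)$, and (ii) an automorphism of $\mc U$ fixes $q$ setwise if and only if it fixes $\mathrm{Cb}(q)$ pointwise.

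Applying (i) to the two forking hypotheses yields $\mathrm{Cb}(p) \subseteq \mathrm{acl}^{eq}(\emptyset)$ and $\mathrm{Cb}(bp) \subseteq \mathrm{acl}^{eq}(A)$. Let $c$ be a tuple coding both canonical bases; then $c \in \mathrm{acl}^{eq}(A)$. The heart of the argument is a short Galois-orbit calculation showing $\tilde{b} \in \mathrm{dcl}^{eq}(c)$. Given any automorphism $\sigma$ of $\mc U$ fixing $c$ pointwise, property (ii) forces $\sigma(p) = p$ and $\sigma(bp) = bp$. Since $\sigma$ is an automorphism, $\sigma(bp) = \sigma(b) \cdot \sigma(p) = \sigma(b) \cdot p$, so $\sigma(b) \cdot p = b \cdot p$, i.e. $b^{-1}\sigma(b) \in \mathrm{stab}_G(p)$, which is exactly $\sigma(\tilde{b}) = \tilde{b}$. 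Since every automorphism fixing $c$ fixes $\tilde{b}$, we conclude $\tilde{b} \in \mathrm{dcl}^{eq}(c) \subseteq \mathrm{acl}^{eq}(A)$, as required.

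The step I expect to be the only real obstacle is mild bookkeeping rather than substance: one must confirm that any parameter from $K$ needed to define $G$ itself is handled correctly. This is automatic, because the formula ``$x \in G$'' occurs in $p$, so the canonical parameter of $G$ is already absorbed into $\mathrm{Cb}(p)$, and hence into $c$. Thus any $\sigma$ fixing $c$ satisfies $\sigma(G) = G$ and $\sigma(\mathrm{stab}_G(p)) = \mathrm{stab}_G(p)$, so the coset computation above is well-posed. Once that observation is in place, the proof is a formal manipulation of canonical bases.
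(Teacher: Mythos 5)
Your argument is the standard Berline--Lascar canonical-base argument; note that the paper does not actually prove this lemma at all, it only cites \cite{BerlineLascar}, so there is no in-paper proof to compare against. The skeleton is right: translate the two nonforking hypotheses into $\mathrm{Cb}(p)\subseteq\mathrm{acl}^{eq}(\emptyset)$ and $\mathrm{Cb}(bp)\subseteq\mathrm{acl}^{eq}(A)$, and run the Galois computation $\sigma(b\bar p)=\sigma(b)\cdot\sigma(\bar p)$ to place $b^{-1}\sigma(b)$ in the stabilizer. One small reading correction: ``$\sigma$ fixes $q$ setwise iff it fixes $\mathrm{Cb}(q)$ pointwise'' is a statement about the \emph{global nonforking extension} $\bar q$, not about $q\in S(K)$ itself (an automorphism need not fix $K$); since left translation by $b\in G(K)$ is a $K$-definable bijection, the global nonforking extension of $bp$ is $b\bar p$, so the computation does go through at the level of global types.

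The one step that is wrong as justified is the closing ``bookkeeping'' claim that the canonical parameter of $G$ is absorbed into $\mathrm{Cb}(p)$ because the formula ``$x\in G$'' occurs in $p$. Fixing $\mathrm{Cb}(p)$ pointwise only forces $\sigma(\bar p)=\bar p$; this tells you that $\bar p$ concentrates on $G\cap\sigma(G)$, not that $\sigma(G)=G$, and it says nothing about $\sigma$ preserving the multiplication map. For instance, if $p$ is the generic type of an $\emptyset$-definable subgroup $H$ of a group $G$ whose defining parameters are not algebraic over $\emptyset$, then $\mathrm{Cb}(p)\subseteq\mathrm{acl}^{eq}(\emptyset)$ while $\sigma$ may move $G$ (and its group law) freely, and then $\sigma(b\bar p)$ is a translate with respect to the \emph{image} group law, so the identity $\sigma(b\bar p)=\sigma(b)\cdot\bar p$ breaks. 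The honest fix is to adjoin the canonical parameter $e$ of $(G,\cdot)$ to your tuple $c$, which is harmless only if $e\in\mathrm{acl}^{eq}(A)$ --- something the hypotheses as literally stated do not give, since $A$ is an arbitrary subset of $K$. The standing convention in \cite{BerlineLascar} is that the group is $\emptyset$-definable, and in the one place this paper uses the lemma (Proposition \ref{KeyLemma}) it is applied with $A\supseteq K$, so the defect is one of stated hypotheses rather than of substance; but you should invoke that convention explicitly rather than claim the point is automatic from ``$x\in G$''$\,\in p$.
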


\begin{prop}\label{KeyLemma} Let $G$ be a differential algebraic group and suppose that $a,b \in G$ are such that $\tau ( a/K)= \tau (b /K) = n $ and $ a \ind _K b$. Suppose also that $\tau (ba /K ) = n $ and $a_n (ba /K) = a_n (a/K)$. Let $p = tp (a/K)$ Let $ \tilde b \in G/stab_G(p)$ be the left coset of $b$ modulo $stag_G (p)$. Then $\tau ( \tilde b /K) <n$. 
\end{prop} 
\begin{proof} Let $K'$ be a differentially closed extension field of $K$ in $\mc U$ which contains $K \langle b \rangle $ and is linearly disjoint from $K \langle a \rangle $ over $K$. Then $tp (a / K')$ does not fork over $K$. That is $chi_ {a/K} = \chi _{a/K'}$. So, $\tau (a/K'=n$ and $a_\tau (a/K')= a_\tau (a/K).$ We can also see that $K ' \langle a \rangle = K' \langle ba \rangle, $ so 
$\tau (ba /K') = \tau (a/K')$ and $a_\tau (ba/K') = a_\tau (a/K')$. By hypothesis, $\tau ( ba/K') = \tau (ba /K) =n$ and $a_\tau (ba /K' ) = a_\tau (ba / K).$ Now, $\chi _ {ba /K' } \equiv _n \chi _{ba/K}.$ 

By theorem \ref{first} we can get $\bar c \in K'$ with $\chi_{\bar c /K}(t)$ of degree less than $n$ such that $ba \ind _{K \langle \bar c \rangle } K'.$ Then, applying lemma \ref{second} we can see that $\tilde b$ is algebraic over $K \langle \bar c \rangle.$ We know that  $\chi_{\bar c /K}(t)$ is of degree less than $n$ and so $\chi_{\tilde b /K}(t)$ is as well. 

\end{proof}

\section{Indecomposability}
\begin{defn}\label{IndecomposableDefn}
Let $G$ be a differential algebraic group defined over $K.$  Let $X$ be a definable subset of $G$. For any $n \in \m N,$ $X$ is $n$-indecomposable if $\tau(X/H) \geq n $ or $|X/H|=1$ for any definable subgroup $H \leq G.$  We use \emph{indecomposable} to mean $\tau(G)$-indecomposable.
\end{defn} 

Note that $n$-indecomposability implies irreducibility. 

Elimination of imaginaries can be used to justify the notation $\tau (X/H).$ For any two elements $x_1$ and $x_2$ of $X,$ say $x_1 \sim x_2$ if $x_1$ and $x_2$ are in the same left coset of $H.$ This is a definable equivalence relation, so the set $X/H$ has differential algebraic structure. So, one may apply \ref{combinatorialprep} to assign $\tau (X/H).$ For more regarding elimination of imaginaries in differentially closed fields, see \cite{MMP}. Kolchin \cite{KolchinDAG} also constructs quotient objects, and his approach could be used here as well. 

We should note the relationship of this notion to that of \emph{strongly connected} considered by \cite{CassidySinger}. A subgroup of $G$ is strongly connected if and only if it is $n$-indecomposable where $n =\tau(G).$ We will occasionally use \emph{$n$-connected} to mean $n$-indecomposable, but only in the case that the definable set being considered is actually a \emph{subgroup}. In the next section, we will show some techniques for constructing indecomposable sets from indecomposable groups. 

\begin{defn} A differential algebraic group $G$ is \emph{almost simple} if for all proper normal differential algebraic subgroups $H$ of $G,$ we have that $\tau(H) < \tau (G).$ 
\end{defn} 

We note that by \ref{quot}, almost simple is a strengthening of strong connectedness. In the definition of strong connectedness, one could take the subgroup $H$ to be normal without changing the definition. The reason for this is that, given a differential algebraic group $G$, then the set of subgroups such that $\tau (G/H) < l$ is closed under finite intersection. From this observation and the basis theorem for the Kolchin topology (or the Baldwin-Saxl condition), one can prove the existence of the strongly connected component of the identity for an arbitrary differential algebraic group. All of the preceding discussion in this section comes from \cite{CassidySinger}. 

In the definition of almost simple, taking the subgroup $H$ to be normal is necessary. For example, every differential algebraic group $G$ has an commutative differential algebraic subgroup of the same $\Delta$-type \cite{JIndecomp2}.

Cassidy and Singer proved the following, showing the robustness of the notion of strong connectedness under quotients,
\begin{prop} Every quotient $X/H$ of a $n$-connected definable normal subgroup $X$ by a definable subgroup is $n$-connected.
\end{prop}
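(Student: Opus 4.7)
The plan is to reduce the $n$-connectedness of $X/H$ to that of $X$ via the standard correspondence and isomorphism theorems, which remain valid in the category of differential algebraic groups thanks to elimination of imaginaries.

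More concretely, let $Y = X/H$ and let $K \leq Y$ be an arbitrary definable subgroup. We must verify that $\tau(Y/K) \geq n$ or $|Y/K| = 1$. By the correspondence theorem for quotient groups, definable subgroups of $Y$ are in bijection with definable subgroups $H'$ of $X$ containing $H$, via $K \mapsto \pi^{-1}(K) = H'$ where $\pi: X \to X/H$ is the canonical projection. The third isomorphism theorem then provides a definable group isomorphism
$$Y/K \;=\; (X/H)/(H'/H) \;\cong\; X/H'.$$

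Since $H' \leq X \leq G$ is itself a definable subgroup of $G$, the hypothesis that $X$ is $n$-connected (applied to $H'$) yields that either $\tau(X/H') \geq n$ or $|X/H'| = 1$. Because $\tau$ is a $\Delta$-birational invariant and is in particular preserved under definable bijections, and because cardinality is obviously preserved, we transport this dichotomy through the above isomorphism to conclude $\tau(Y/K) \geq n$ or $|Y/K| = 1$, as desired.

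The argument has essentially no obstacle beyond invoking the correct structural facts: the correspondence between definable subgroups of $Y$ and definable subgroups of $X$ lying above $H$ (which follows from elimination of imaginaries in $DCF_{0,m}$ together with the surjectivity of $\pi$ on definable subgroups), the third isomorphism theorem in this category, and the $\Delta$-birational invariance of $\tau$ noted in the discussion following Theorem~\ref{combinatorialprep}. The mild subtlety worth flagging is that one should observe that for $K$ to be a subgroup of $Y$ one needs $H \trianglelefteq H'$ (automatic once $H \trianglelefteq X$, which is required just to form the quotient $X/H$), so the correspondence genuinely lands inside subgroups of $G$ to which $n$-connectedness of $X$ may be applied.
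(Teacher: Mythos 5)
Your argument is correct. Note that the paper itself offers no proof of this proposition; it is quoted as a result of Cassidy and Singer, so there is no in-text argument to compare against, but your reduction via the subgroup correspondence and the third isomorphism theorem is the natural one and there is no gap in it. Two small points of hygiene. First, when the intermediate subgroup $H' = \pi^{-1}(K)$ is not normal in $X$, the identification $(X/H)/(H'/H) \cong X/H'$ is a definable bijection of coset spaces rather than a group isomorphism; that is all you need, since the dichotomy in Definition \ref{IndecomposableDefn} only concerns the differential type and cardinality of the coset space, both of which are preserved by definable bijections (definable closure equals differential field closure, and $\tau$ is a $\Delta$-birational invariant). Second, the definition of $n$-indecomposability quantifies over definable subgroups of the ambient group, not just subgroups of the set in question; your restriction to subgroups $K \leq Y$ is harmless because for any definable subgroup $K'$ of the ambient group the coset space $Y/K'$ is in definable bijection with $Y/(K' \cap Y)$, so the dichotomy for subgroups of $Y$ implies it in general. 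With those two remarks made explicit, the proof is complete.
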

For many other properties of strongly connected subgroups and numerous examples, see \cite{CassidySinger}. The next proposition is used in the proof of the main theorem, but is stated separately because it applies more generally. 

\begin{prop}\label{general} Let $X_i$, for $i \in I$, be a family of $l$-indecomposable sets of a differential algebraic group $G$ for some $l \in \m N.$ Assume each $X_i$ contains the identity. Let $H =\langle X_i \rangle$ and \emph{suppose} that $H$ is definable. Then $H$ is $l$-indecomposable.
\end{prop}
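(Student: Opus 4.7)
The plan is to test $H$ against an arbitrary definable subgroup $H' \leq G$ and reduce $k$-indecomposability of $H$ to that of each $X_i$, using the fact that the $X_i$ sit inside $H$ and all contain the identity. Fix any definable subgroup $H' \leq G$ and look at the canonical projection $\pi : G \to G/H'$. The goal is to show that either $\tau(H/H') \geq k$ or $|H/H'| = 1$.

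First I would handle the dichotomy. Suppose $\tau(H/H') < k$. Since every $X_i$ is contained in $H$, the image $\pi(X_i) \subseteq \pi(H)$, and differential type is monotone under containment of definable sets, so $\tau(X_i/H') \leq \tau(H/H') < k$. By the $k$-indecomposability of $X_i$, this forces $|X_i/H'| = 1$. Because $1_G \in X_i$, the single coset in $\pi(X_i)$ must be the identity coset, so $X_i \subseteq H'$.

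Next I would use the generation hypothesis. Since $X_i \subseteq H'$ for every $i \in I$ and $H' \leq G$ is a subgroup, the group $\langle X_i : i \in I \rangle$ generated by the $X_i$ is contained in $H'$. By hypothesis this generated group is exactly $H$, so $H \subseteq H'$ and hence $|H/H'| = 1$. This completes the verification that, for every definable subgroup $H' \leq G$, either $\tau(H/H') \geq k$ or $|H/H'| = 1$, which is precisely $k$-indecomposability of $H$.

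I do not expect a serious obstacle here; the proposition is essentially a bookkeeping argument. The only subtle point worth stating carefully is the monotonicity $\tau(\pi(X_i)) \leq \tau(\pi(H))$ and the observation that $1_G \in X_i$ is what pins the unique coset down to be $H'$ itself, rather than some arbitrary translate. These are the two facts that let the indecomposability of each generator lift to indecomposability of the generated (definable) subgroup without any further appeal to the stabilizer machinery used in Theorem \ref{indecomposability}.
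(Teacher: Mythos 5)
Your argument is correct and is essentially the paper's own proof stated in contrapositive form: the paper assumes $H\not\leq H_1$, picks an $X_i\not\subseteq H_1$, and applies $k$-indecomposability of that $X_i$ together with monotonicity of $\tau$ under inclusion, which is exactly the same use of the two facts you isolate (monotonicity and $1_G\in X_i$ pinning the unique coset to $H'$). No substantive difference.
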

\begin{proof} Let $H_1 \leq G$ with $H \not \leq H_1.$ Then there exists $i$ such that $X_i \not \subset H_1.$ For this particular $i,$ we know, by $l$-indecomposability, that the coset space $X_i/H_1$ has differential type at least $l$. That is $\tau (X_i /H_1) \geq l.$ But, then $\tau (H /H_1) \geq l$ since $H$ contains $X_i.$ 
\end{proof}

Note that there is no assumption in \ref{general} that $\tau (G) =l.$ This is assumed in the indecomposability theorem \ref{indecomposability}, but the proposition about the $l$-indecomposability of the generated subgroup holds more generally, assuming the group is definable. In general, we do not know about the definability of such a subgroup, unless additional assumptions are made.

\begin{thm}\label{indecomposability} Let $G$ be a differential algebraic group. Let $X_i$ for $i \in I$ be a family of $n$-indecomposable definable subsets of $G$ with $\tau(G)=n$. Assume that $1_G \in X_i.$ Then the $X_i 's$ generate a strongly connected differential algebraic subgroup of $G.$  
\end{thm}
\begin{proof} Since we are considering the the group generated by the collection of sets $X_i$, without loss of generality, assume that for each $i$, there is a $j$ such that $X_i^{-1} = X_j$. 
Fix $n=\tau(G).$ Let $\Sigma$ be the set of finite sequences of elements of $I,$ possibly with repetition. Then for $\sigma \in \Sigma$ with $length(\sigma)=n_1,$ we let $X_\sigma=X_{\sigma (1)} \cdot \ldots \cdot X_{\sigma (n_1)}.$ 
Let $k_\sigma =a_\tau({ X_ \sigma}).$ We note that $k_\sigma \leq a_\tau(G).$ For the remainder of the proof, we let $\sigma \in \Sigma$ be such that $$k_{\sigma }=Sup_{\sigma \in \Sigma} (k_\sigma)$$ is achieved; this supremum is simply the maximum of a finite set of nonnegative integers since the differential types of the finite products are all equal to $\tau (G)$. Note that $\tau (X_ \sigma =n$ and $a_\tau (X_ \sigma) = k_ \sigma$ and $X_ \sigma $ is also an $n$-indecomposable subset. 

Now, let $p \in X_{\sigma}$ such that $\tau(p) =n$ and $a_p=k_{\sigma}$ (here we are regarding $p$ as a point and $X_ \sigma$ as a subset of the Stone space). 
We consider $stab_G(p).$ First, we note that $$stab_G(p) \subseteq X_{\sigma} X_{\sigma}^{-1}$$ To see this, let $b \in stab_G(p)$ and $a \models p.$ Then both $a$ and $ba$ satisfy $X_{\sigma}.$ Then $b=baa^{-1} \in X_{\sigma} X_{\sigma}^{-1}.$ Next, we will show, for all $i,$ 
$$X_i \subset stab_G(p) \text{ for all $i \in I$}$$
Let $b \in X_i$ be generic over $K$. We may suppose without loss of generality that $a \ind _K b$. Then $ba$ is generic for $X_i X_ \sigma$ over $K$. Therefore $\tau(ba) = \tau (a) = n.$ As we remarked above $X_ \sigma \subseteq X_i X_ \sigma $. So, the maximality of the choice of $a_\tau ( X_ \sigma)$ gives that $a_ \tau (ba /K) = a_ \tau ( X_i X_\sigma ) = k_ \sigma.$

We claim that $\tau (\tilde b ) < \tau (a) =n$ where $\tilde b$ is the class of $b$ modulo $stab_G(p).$ This follows by applying Proposition \ref{KeyLemma} and noting that $p$ has the properties needed for the hypothesis of that proposition, by the maximality of the differential type of $p \in X_{\sigma}$. The space of types satisfying $X_i$ maps surjectively onto $X_i /stab_G(p)$. The map also preserves genericity since each definable subset of $X_i /stab_G(p)$ is the image of a definable subset of $X_i$, so if $\tilde b$ were not generic, then $\tilde b \in Y_i/stab_G (p) $ for some proper $K$-definable $Y_i \subseteq X_i,$ and so $b \in Y_i$, contradicting the genericity of $b$. Now we have a a contradiction to the $n$-indecomposability of $X_i$ unless $X_i \subseteq stab_G (p)$. 
\end{proof}

Of course, as in the more familiar case of groups of finite Morley rank we know more than that the group generated by the family is definable. We have constructed the definition of the group which gives it a very particular form. For further discussion see \cite{Marker}, chapter 7, section 3. Analogies between strongly connected differential algebraic groups and groups of finite Morley rank will be pursued in \cite{JIndecomp2}.

\begin{rem} The notion of $n$-indecomposable presented here is similar (and inspired by) the notion of $\alpha$-indecomposable considered by \cite{BerlineLascar}. It might be the case that this notion is a specialization of the notion considered there (note that for differential fields, the Lascar rank of a type is always less than $\omega^m$, where $m$ is the number of derivations). Proving that the notions coincide for differential fields would require finding a lower bound for Lascar rank in terms of $\Delta$-type. There is currently no known lower bound for general differential varieties. For a discussion of this issue see \cite{DeltaCompleteness}. Even if such a lower bound is found, there are compelling reasons to develop indecomposability in this manner. In difference-differential fields, it is known that no such lower bound for Lascar rank holds, and so the notions of indecomposability coming from the analogue of the Kolchin polynomial and the notion coming from Lascar rank will be distinct. 
\end{rem}

\section{Definability of derived subgroups of strongly connected groups}
In this section, we will first show some applications of the ideas and techniques for constructing indecomposable sets. Any group naturally acts on itself by conjugation, that is $x \mapsto gxg^{-1}.$ For a subset $X \subseteq G$, we define 
$$X^ G : = \{ g x g^{-1} \, | \, x \in X , \, g \in G \}.$$ 
Analysis of this action provides a way of transferring properties of the group doing the action to the set on which it acts. Now, fix a differential algebraic group $G$ and a differential algebraic subgroup $H.$ A subset $X \subseteq G$ is \emph{$H$-invariant} if for all $h \in H,$ conjugation by $h$ is a bijection from $X$ to itself, that is $H$ is contained in the normalizer of the set $X$. 

First, we give the following example, due to Cassidy \cite[page 110]{KolchinDAG}, of a differential algebraic group for which the derived subgroup is not a differential algebraic group. 
\begin{exam}
Let $\Delta=\{\delta_1, \delta_2\}.$ Then consider the following group $G$ of matrices of the form: 
\[ \left( \begin{array}{ccc}
1 & u_1 & u \\
0 & 1 & u_2 \\
0 & 0 & 1 \end{array} \right)\] where $\delta_i (u_i)=0.$ 
Of course, 
\begin{eqnarray*}
& \left( \begin{array}{ccc}
1 & u_1 & u \\
0 & 1 & u_2 \\
0 & 0 & 1 \end{array} \right)
\left( \begin{array}{ccc}
1 & v_1 & v \\
0 & 1 & v_2 \\
0 & 0 & 1 \end{array} \right) 
\left( \begin{array}{ccc}
1 & u_1 & u \\
0 & 1 & u_2 \\
0 & 0 & 1 \end{array} \right)^{-1}
\left( \begin{array}{ccc}
1 & v_1 & v \\
0 & 1 & v_2 \\
0 & 0 & 1 \end{array} \right)^{-1} \\
& =\left( \begin{array}{ccc}
1 & 0 & u_1 v_2 -v_1 u_2 \\
0 & 1 & 0 \\
0 & 0 & 1 \end{array} \right)
\end{eqnarray*}
Then one can see that the derived subgroup is isomorphic $\m Q (C_{\delta_1} \cup C_{\delta_2}),$ where $C_{\delta_i}$ is the field of $\delta_i$-constants. This is not a differential algebraic group. This group is not strongly connected, however, since the subgroup of matrices of the form:
\[ \left( \begin{array}{ccc}
1 & 0 & u \\
0 & 1 & 0 \\
0 & 0 & 1 \end{array} \right)\]
is a subgroup of $\Delta$-type and typical $\Delta$-dimension equal to $G.$ This means that the coset space has $\Delta$-type strictly smaller than $G.$ Of course, this means that $G$ is not almost simple (or even strongly connected). Theorem \ref{Dcomm} will show that this sort of example is impossible for strongly connected differential algebraic groups.
\end{exam}

The next two lemmas have similar proofs in the context of groups of finite Morley rank (see Chapter 7 of \cite{Marker}).

\begin{lem}\label{Invar} Let $\tau (G) =n$. Let $X$ be $H$-invariant. Suppose for all $H$-invariant differential algebraic subgroups $H_1 \leq G,$ that $|X/H_1|=1$ or $\tau (X/H_1)\geq n.$ Then $X$ is $n$-indecomposable.
\end{lem}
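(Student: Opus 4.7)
The plan is to argue by contradiction and reduce to the $H$-invariant case by intersecting conjugates. Suppose, for contradiction, that $H_1\leq G$ is a definable subgroup for which $|X/H_1|\neq 1$ and $\tau(X/H_1)<n$. The natural move is to pass to the largest $H$-invariant subgroup contained in $H_1$, namely
\[
H_1^{*} := \bigcap_{h\in H} h H_1 h^{-1}.
\]

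The first step is definability of $H_1^{*}$. Since $DCF_{0,m}$ is $\omega$-stable, the differential algebraic group $G$ satisfies the descending chain condition on definable subgroups, so the intersection stabilizes after finitely many steps: $H_1^{*}=\bigcap_{i=1}^{k} h_i H_1 h_i^{-1}$ for some $h_1,\dots,h_k\in H$. Thus $H_1^{*}$ is definable, $H$-invariant, and contained in $H_1$.

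Next I would estimate $\tau(X/H_1^{*})$. Because $X$ is $H$-invariant, conjugation by any $h\in H$ yields a definable bijection $X/H_1 \to X/(hH_1h^{-1})$ given by $xH_1\mapsto hxh^{-1}\cdot hH_1h^{-1}=hxH_1h^{-1}$, so each factor in the product below has the same differential type as $X/H_1$. The map
\[
X/H_1^{*} \longrightarrow \prod_{i=1}^{k} X/(h_i H_1 h_i^{-1}), \qquad x H_1^{*} \mapsto \bigl(x h_i H_1 h_i^{-1}\bigr)_{i=1}^{k},
\]
is well defined and injective, since two elements of $X$ with the same image differ by a member of every $h_iH_1h_i^{-1}$, hence of $H_1^{*}$. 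Kolchin polynomials being additive on products, $\tau$ of a finite product equals the maximum of the factor $\tau$'s, and $\tau$ is monotone on definable subsets; this gives
\[
\tau(X/H_1^{*}) \leq \max_i \tau\bigl(X/(h_i H_1 h_i^{-1})\bigr) = \tau(X/H_1) < n.
\]

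Finally, since $H_1^{*}\leq H_1$, the natural surjection $X/H_1^{*}\to X/H_1$ shows that $|X/H_1^{*}|=1$ would force $|X/H_1|=1$; hence $|X/H_1^{*}|\neq 1$. Thus $H_1^{*}$ is an $H$-invariant definable subgroup violating the hypothesis, giving the desired contradiction. The main obstacle I anticipate is the definability of $H_1^{*}$: collapsing an a priori infinite intersection of conjugates to a finite one requires the DCC on definable subgroups, which is available because the ambient theory is $\omega$-stable. The type comparison along the product injection is then essentially formal.
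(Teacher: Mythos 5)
Your proposal is correct and follows essentially the same route as the paper: replace the offending subgroup by the intersection of its $H$-conjugates, note that this intersection is a finite subintersection (the paper cites Baldwin--Saxl where you invoke the DCC from $\omega$-stability, which amounts to the same thing here), and derive a contradiction with the hypothesis on $H$-invariant subgroups. Your explicit injection of $X/H_1^{*}$ into a finite product to bound the differential type is a slightly more detailed justification of a step the paper leaves implicit, but the argument is the same.
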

\begin{proof} Suppose that there is a differential algebraic subgroup $H_2 \leq G$ with $\tau(X/H_2) <n,$ but $|X/H_2 | \neq 1.$ Then, by the $H$-invariance of $X,$ if $h \in H$ then $x^h \in X.$ Thus $h$ defines a map from $X/H_2 \rightarrow X/H_2^h.$ In particular, $\tau(X/H_2^h) <n,$ but $|X/H_2^h| \neq 1.$ Then, set $$H_1 = \bigcap_{h \in H} H_2^h$$ Then, by the Baldwin-Saxl condition (or by the Notherianity of the Kolchin topology), we know that $H_1$ is actually definable and is, in fact, the intersection of finitely many of the subgroups. But then, $H_1$ is clearly $H$-invariant and there is a differential birational morphism: 
$$X/H_2  \rightarrow  X / (h H_2 h^{-1} )$$
given by (noting the $H$-invariance of $X$),
$$x H_2 \mapsto h (x H_2 )h^{-1} = hxh^{-1} h H_2 h^ {-1} = x_1 h H_2 h^{-1} .$$

It follows that $\tau (X/ h H_2 h^{-1} ) = \tau ( X/H_2) <n$. Write the intersection, $H_1$ in the form $ \cap _{j=1}^ r (h_j H_2 h_j ^{-1})$. We have a differential rational embedding from 
$$X/H_1 \rightarrow \Pi _{j=1}^ r X/ (h_j H_2 h_j ^{-1})$$ 
given by $$x H_1 \mapsto  \Pi _{j=1}^ r x (h_j H_2 h_j ^{-1}).$$
It then follows immediately that $\tau (X/H_1)<n$ and $|X/H_1| \neq 1 ,$ contradicting the assumptions on $X.$   
\end{proof}

For $g \in G$ and $H$ a subgroup of $G$, let $g^H$ be the orbit of $g$ under $H$ via the action of conjugation, 

$$g^ H : = \{ h g h^{-1} \, | \,  h \in H \}.$$ 

\begin{lem} \label{4.3} Let $G$ be a differential algebraic group of differential type $n$, and let $g \in G$. Let $H$ be an $n$-indecomposable differential algebraic subgroup of $G$. Then the orbit $g^H$ of $g$ under $H$ is $n$-indecomposable. 
\end{lem}
\begin{proof}
The set $g^H$ is $H$-invariant. Using the previous result, it is enough to prove the result for all $N \leq G$ which are $H$-invariant. So, to that end, suppose that $N$ is such that $|g^H /N|\neq 1$ and $\tau(g^H/N) <n.$ Now, we get, by the $H$-invariance of $g^H$ and $N,$ a transitive action of $H$ on $g^H /N$, $$h*g^{h_1} N \mapsto h g^h_1 N h^{-1} = h g^h h^{-1} h N h^{-1} =g^{ h h_1} N$$ Thus, this is a transitive action of $H$ on a differential algebraic variety of differential type less than n. The centralizer of $g$ in $H$ must be a subgroup of $H$ of differential type $\tau(H)$ and typical differential dimension equal to that of $H.$  This is impossible, by the indecomposability of $H$, unless the kernel is simply $H$ itself (see \cite{CassidySinger} or \cite{JIndecomp2}). If that is the case, then by the transitivity of the action, $|g^H/N|=1.$
\end{proof}

Cassidy and Singer make the following comment in \cite{CassidySinger}, ``We also do not have an example of a non-commutative almost simple linear differential algebraic group whose commutator subgroup is not
closed in the Kolchin topology." The next result shows that such an example is not possible, even in the more general case of the group being strongly connected (with no assumption of linearity or almost simplicity). 

\begin{thm}\label{Dcomm}
Let $G$ be a strongly connected non-commutative differential algebraic group. The derived group of $G$ is a strongly connected differential algebraic subgroup.
\end{thm}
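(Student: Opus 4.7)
The plan is to observe that this theorem is essentially a direct combination of two results already in hand, and to verify that the hypotheses of each line up. Specifically, Theorem \ref{Dcomm} tells us that for any strongly connected differential algebraic group $G$, the commutator subgroup $[G,G]$ is itself a differential algebraic subgroup and is strongly connected. This handles two of the three conclusions (definability as a differential algebraic subgroup, and strong connectedness) without using the nonabelian hypothesis.

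The remaining content of the theorem is the equality $\tau([G,G]) = \tau(G)$. First I would note that we automatically have $\tau([G,G]) \leq \tau(G)$ since $[G,G]$ is a differential algebraic subgroup of $G$. For the reverse inequality, I would invoke the Cassidy--Singer result cited immediately before the statement: if a strongly connected differential algebraic group is not commutative, then the differential type of the (differential closure of the) commutator subgroup equals the differential type of the whole group. Since Theorem \ref{Dcomm} has already told us that $[G,G]$ is itself Kolchin closed, the ``differential closure'' qualifier is no longer needed, and we obtain $\tau([G,G]) = \tau(G)$ directly.

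So the structure of the write-up is really just: apply Theorem \ref{Dcomm}, then apply the Cassidy--Singer bound under the nonabelian hypothesis. There is no substantive obstacle here; the only thing to be careful about is that the Cassidy--Singer statement is stated for the \emph{differential closure} of $[G,G]$, and one needs Theorem \ref{Dcomm} precisely to remove that closure and identify the two groups. Consequently the proof collapses to roughly two sentences, mirroring the paragraph that precedes the statement.
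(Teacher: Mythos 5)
Your proposal is correct and matches the paper's own argument exactly: the paper likewise obtains the theorem by combining Theorem \ref{Dcomm} with the Cassidy--Singer result that for a nonabelian strongly connected group the differential type of the Kolchin closure of the commutator equals $\tau(G)$, using Theorem \ref{Dcomm} to identify that closure with $[G,G]$ itself. No further review is needed.
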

\begin{proof}
Let $\tau (G) = n$. $G$ is strongly connected, so by Lemma \ref{4.3}, for $g \in G$, $g^G$ is $n$-indecomposable. So, $X_g:= g^G g^{-1}$ is $n$-indecomposable and contains the identity. By Theorem \ref{indecomposability}, the group generated by the family $(X_g)_{g \in G}$ is a strongly connected differential algebraic subgroup of $G$. 
\end{proof}

We should also note the result of Cassidy and Singer which says that if a strongly connected differential algebraic group is not commutative, then the differential type of the differential closure of derived subgroup is equal to the differential type of the whole group \cite{CassidySinger}. So, putting this together with the above theorem yields: 

\begin{corr} Let $G$ be a strongly connected noncommutative differential algebraic group. Then the derived subgroup of $G$ is a strongly connected differential algebraic subgroup with $\tau([G,G])=\tau(G).$ 
\end{corr}
Because derived subgroups are characteristic (thus normal), they are candidates to appear in the Cassidy-Singer decomposition of $G$ (see \cite{CassidySinger}. We also get the following generalization of a theorem of Cassidy and Singer (who proved it in the case of an almost simple linear differential algebraic groups of differential type at most one). 

\begin{thm}
Let $G$ be an almost simple differential algebraic group. Then $G$ is either commutative or perfect. 
\end{thm}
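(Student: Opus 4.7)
The plan is to obtain this as a direct consequence of the previous theorem, which already handles the heavy lifting about commutators of strongly connected groups. The argument proceeds by dichotomy: if $G$ is commutative there is nothing to prove, so the substance lies in showing that a non-commutative almost simple $G$ must satisfy $[G,G]=G$.

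First I would record that almost simplicity forces strong connectedness. If $G$ were not strongly connected, the Cassidy--Singer decomposition produces a characteristic (in particular normal) differential algebraic subgroup $G^0 \lneq G$ with $\tau(G^0)=\tau(G)$ and $\alpha_{G^0}=\alpha_G$; this contradicts the hypothesis that every proper normal differential algebraic subgroup of $G$ has strictly smaller $\Delta$-type. Hence $G$ is strongly connected, and we are in position to invoke the preceding theorem.

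Next, assuming $G$ is not commutative, apply Theorem~4.6 (the immediately preceding result) to conclude that $[G,G]$ is a strongly connected differential algebraic subgroup of $G$ with $\tau([G,G])=\tau(G)$. The crucial point is that $[G,G]$ exists as a differential algebraic subgroup (rather than only as an abstract subgroup possibly failing to be Kolchin closed), which is exactly what Theorem~\ref{Dcomm} guarantees for strongly connected $G$. Since the commutator subgroup is characteristic, it is in particular normal in $G$.

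Finally, since $G$ is almost simple and $[G,G]$ is a normal differential algebraic subgroup with $\tau([G,G])=\tau(G)$, the defining property of almost simplicity forces $[G,G]$ to fail to be a \emph{proper} subgroup, i.e.\ $[G,G]=G$. Thus $G$ is perfect, completing the dichotomy. The only potential snag is verifying that ``almost simple'' as used here indeed means every proper normal differential algebraic subgroup has strictly smaller $\Delta$-type (matching the usage in the motivating Cassidy example earlier), and that this immediately rules out the proper normal subgroup $[G,G]$ once we know $\tau([G,G])=\tau(G)$; but this is precisely how the definition is being used throughout the section.
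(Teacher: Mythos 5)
Your proposal is correct and follows essentially the same route as the paper: invoke the preceding theorem to get that a noncommutative strongly connected $G$ has $[G,G]$ a differential algebraic subgroup with $\tau([G,G])=\tau(G)$, then use almost simplicity (no proper normal differential algebraic subgroup of full $\Delta$-type) to force $[G,G]=G$. The only difference is that you make explicit the step that almost simple implies strongly connected, which the paper leaves implicit by relying on the Cassidy--Singer setup.
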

\begin{proof} $\tau([G,G])= \tau(G)$ implies that $[G,G]=G,$ since $G$ is almost simple. Otherwise $[G,G]=1.$  
\end{proof}

Explicit calculations of the Kolchin polynomial for differential algebraic subgroups of $\m G_a ( \mc U)$ are often easier than for general differential algebraic groups or varieties. We will briefly describe how to perform these calculations, and how they lead to many examples of indecomposable (strongly connected) differential algebraic groups. There is a well-developed machinery for doing such calculations, for instance see \cite{kondratieva1998differential}. 

The machinery is particularly easy to deal with in the case that $G$ is the zero set of a single linear homogeneous differential polynomial in a single differential indeterminate, that is, $G$ is given as the zero set of $f(z) \in K \{z\}$. Suppose that, for some orderly ranking of the differential polynomial algebra $K \{ z \}$, the leader of $f(z)$ is $\delta_1^{i_1} \ldots \delta_m^{i_m} z.$ Let $r= \sum_{j=1}^n i_j $.

\begin{prop} \cite[Proposition 4 page 160]{KolchinDAAG} With $f$ given as above, let $V$ be the zero set of $f$. Then $$\chi_V (t) = \binom{t+m}{m} - \binom{t - r+m}{m}.$$
\end{prop} 

So, $\tau (G) = m-1$ and $ a_ \tau (G) =r$. Now let $H$ be a proper definable subgroup. Since the containment is proper, we must have that $\chi_H < \chi _G .$ Now, suppose that $\tau (H) = \tau (G) = m-1$ and $a_ \tau ( H) = a_ \tau ( G) = r$. Take $a \in H$ generic over a field $K$ containing the field of definition of $G$ and $H$. Then $\tau ( a/K)= m-1$ and $a_ \tau (a/K) = r$. The following is the theorem of Mikhalev and Pankratiev:

\begin{thm} \cite{MP1980} The minimal Kolchin polynomial in the set of all Kolchin polynomials of differentially finitely generated differential field extensions of $\mc F /K$ of degree $m-1$ (again, $m$ is the number of derivations) and typical differential dimension $r$ is $\binom{t+m}{m} - \binom{t - r+m}{m}$. 
\end{thm} 

This contradicts our assumption that $\tau (H) = \tau (G) = m-1$ and $a_ \tau ( H) = a_ \tau ( G) = r$, and establishes: 

\begin{prop}\label{Suersub} Let $G$ be a differential algebraic subgroup of $\m G_a ( \mc U )$ defined by a single differential polynomial. Then $G$ is strongly connected. 
\end{prop} 

\begin{exam}
We will again work over a model of $DCF_{0,2}.$ The following example was explored in \cite{CassidySinger} and was originally given in \cite{Blumberg}. Let $G$ be the solution set of $$(c_2 \delta_1^3 -c_2 \delta_1^2 \delta_2 - 2 c_2 \delta_1 \delta_2 +c_2^2 \delta_2^2 +2 \delta_2)x =0 $$ where $\delta_1 c_2=1$ and $\delta_2 c_2 =0.$ By the above discussion of linear homogeneous differential equations, this is a strongly connected differential algebraic group. There are differential algebraic subgroups of $\Delta$-type 1. In fact, since  
\begin{eqnarray*}
& &  c_2 \delta_1^3 -c_2 \delta_1^2 \delta_2 - 2 c_2 \delta_1 \delta_2 +c_2^2 \delta_2^2 +2 \delta_2\\
& &= (c_2 \delta_1 -c_2^2 \delta_2 -2)(\delta_1^2 -\delta_2)\\
& &=(c_2 \delta_1^2 -c_2 \delta_2 -2 \delta_1)(\delta_1 -c_2 \delta_2),\\
\end{eqnarray*}
the solution sets to $\delta_1^2 x -\delta_2 x=0$ and $\delta_1 x -c_2 \delta_2 x =0$ are differential algebraic subgroups. Note that $\delta_1^2 x -\delta_2 x=0$ is the heat equation. It follows from Proposition \ref{Suersub} that this subgroup is strongly connected. In \cite{Suer2007Article} Suer showed the solution set of the heat equation has Lascar rank $\omega$ by showing that every definable subset has finite transcendence degree. This actually establishes the stronger fact that the subgroup is almost simple. 

Our Proposition \ref{Suersub} also shows that the group $G$ defined by $\delta_1 x -c_2 \delta_2 x =0$ is strongly connected. But, in this case, $(\tau(G) , a_\tau (G) ) = (1,1)$ strong connectivity implies that any proper definable subgroup $H$ has $(\tau (H) , a _\tau (H) ) <_{Lex} (\tau (G) , a_ \tau (G))$. Then $\tau (H) = 0$ or $\tau (H) =-1$. So, $G$ is almost simple. One can also see this fact more directly in this case. We will show that the subgroup given by the solutions to $\delta_1 x -c_2 \delta_2 x =0$ only has finite transcendence degree definable subsets (and is thus almost simple). This subgroup is irreducible in the Kolchin topology, so the only definable proper subsets correspond to forking extensions of the generic type of subgroup. But, modulo, $\delta_1 x -c_2 \delta_2 x =0$, any differential polynomial can be expressed as a $\delta_2$-polynomial or a $\delta_1 $-polynomial.
\end{exam}

\section{Another Definability Result}
In this section, we prove results inspired by work of Baudisch \cite{Baudisch}. As with many of the results of this paper, the relationship between the results here and the existing work on superstable and $\omega$-stable groups would only become clear by getting control (or showing counterexamples) of Lascar rank in terms of differential type. 

Given a differential algebraic group $G$ with $\tau (G) =n$, there is a minimal definable subgroup $G_n \leq G$ such that $\tau (G/G_n) < \tau (G)$. This minimal subgroup $G_n$ is called the \emph{strongly connected component} of $G$.The strongly connected component of a differential algebraic group $G$ is a characteristic (and thus normal) subgroup \cite{CassidySinger}. 

\begin{thm} Suppose $\tau(G)=n$ and $G_n$ is the strongly connected component of $G.$ Suppose that $H$ is an abstract group such that $H \lhd G_n$. Then if $H$ is simple and non-commutative, $H$ is definable. 
\end{thm}
\begin{proof} Let $h \neq 1$, $h \in H.$ Then we will show $h^{G_n}$ is indecomposable. By Lemma 4.2 we only need to consider normal subgroups $N \leq G_n$. 
So, let $N$ be a definable normal subgroup of $G_n.$ 
First, suppose that $N \cap H \neq 1.$ Then because $H$ is simple and non-commutative, then, $N \cap H = H$ or $ N \cap H = 1_ G$. 
In the first case, the coset space $| h^{G_n} \cup \{ 1\} /N|=1.$ Thus, we may assume that $N \cap H =1$, so $h ^ {G_n}/N$ is differentially rationally isomorphic to $h^{G_n}$. Now, to verify that $h^{G_n}$ is indecomposable, we only need to show that $\tau(h^{G_n}) =n.$

There is a bijection between the elements of $h^{G_n}$ and the $G_n$-cosets of $C_{G_n} (h).$ So, it would suffice to prove that $\tau (G_n/C_{G_n}(h))=n.$ 
We know that $H \not \leq C_{G_n} (h),$ because $H$ is simple. 
But, then $|G_n /C_{G_n}(h)| \neq 1.$ 
Because $G_n$ is strongly connected, and $C_{G_n}(h)$ is a definable subgroup, $\tau(G_n/ C_{G_n}(h))=n.$ 
Now, we know that the following family of definable sets $\langle h^{G_n} \rangle _h \in H$ is indecomposable. Now we apply Theorem \ref{indecomposability} to see that $H$ must be definable.
\end{proof}
Further definability consequences of indecomposability will be pursued in \cite{JIndecomp2}. 

\section{Generalizations of strong connectivity} 
For differential algebraic groups, the notion of indecomposable matches the notion of strongly connected. But, in Definition \ref{IndecomposableDefn} we defined $n$-\emph{indecomposable} for arbitrary $n$, not necessarily equal to the differential type of the ambient differential algebraic group. In this section we will explore the notion in the case that $n \neq \tau(G).$ Consider the following family of proper differential algebraic subgroups, 

$$\mc G_n:= \{ H < G \, | \, \tau(G/H) < n \}$$ 

We note that this family is closed under finite intersections \cite{CassidySinger}. Since $G$ is an $\omega$-stable group, 
$$\bigcap _{H \in \mc G_n } H $$
is a definable subgroup, which we will denote $G_n.$ We note that $G_n$ is a characteristic subgroup of $G$. We will refer to $G_n$ as the $n$-connected component. When $n=\tau (G)$, the $n$-connected component is the strongly connected component, used earlier in the paper.

\begin{exam} 
For simplicity, in this example, suppose that $\Delta = \{ \delta _1 , \delta _2 , \delta _3 \}$. 
It is entirely possibly that the subgroups $H_n$ are different for every $n.$ The following is a very simple example which readily generalizes. Consider the set of matrices of the form 
\[ \left( \begin{array}{cccc}
1 & u_{12} & u_1 & u\\
0 & 1 & 0  & u_2\\
0 & 0 & 1 & u_{23}\\
0 & 0 & 0 & 1 \end{array} \right)\]
where $\delta_1 \delta_2 u_{12}=0,$ $\delta_1 u_1=0$, $\delta_2 u_2 =0,$ and $\delta_2 \delta_3 u_{23}=0.$

This set of matrices is closed under matrix multiplication since \begin{eqnarray*} \left( \begin{array}{cccc}
1 & u_{12} & u_1 & u\\
0 & 1 & 0  & u_2\\
0 & 0 & 1 & u_{23}\\
0 & 0 & 0 & 1 \end{array} \right) \cdot \left( \begin{array}{cccc}
1 & h_{12} & h_1 & h\\
0 & 1 & 0  & h_2\\
0 & 0 & 1 & h_{23}\\
0 & 0 & 0 & 1 \end{array} \right) \\ = 
\left( \begin{array}{cccc}
1 & h_{12}+u_{12} & u_1 +h_1 & h+u_{12}h_2 +u_1 h_{23} +u\\
0 & 1 & 0  & u_2+h_2\\
0 & 0 & 1 & u_{23}+h_{23}\\
0 & 0 & 0 & 1 \end{array} \right) \end{eqnarray*}
and the coordinates evidently satisfy the same differential equations as the original matrices. 
A direct calculation shows that 
\[ \left( \begin{array}{cccc}
1 & u_{12} & u_1 & u\\
0 & 1 & 0  & u_2\\
0 & 0 & 1 & u_{23}\\
0 & 0 & 0 & 1 \end{array} \right)^{-1} = \left( \begin{array}{cccc}
1 & -u_{12} & -u_1 & -u +u_{12}u_2+u_1 u_{23}\\
0 & 1 & 0  & -u_2\\
0 & 0 & 1 & -u_{23}\\
0 & 0 & 0 & 1 \end{array} \right).\]
It is again evident that the coordinates of the inverse matrix satisfy the differential equations stipulated above. So, the set of matrices is a differential algebraic group. 

The group is $0$-indecomposable. The reader should note that in the setting of differential algebraic groups, $0$-indecomposable simply means connected, that is, there are no definable subgroups of finite index. This group is also $1$-connected. 
The $2$-connected component is the subgroup of matrices of the form: 
\[ \left( \begin{array}{cccc}
1 & 0 & u_1 & u\\
0 & 1 & 0  & u_2\\
0 & 0 & 1 & 0\\
0 & 0 & 0 & 1 \end{array} \right)\]
The $3$-connected (in this case, strongly connected) component is the subgroup of matrices of the form: 
\[ \left( \begin{array}{cccc}
1 & 0 & 0 & u\\
0 & 1 & 0  & 0\\
0 & 0 & 1 & 0\\
0 & 0 & 0 & 1 \end{array} \right)\]
\end{exam}
Though much of the analysis of this paper essentially works in the case of $n$-indecomposability with $n \neq \tau(G),$ by relativizing the appropriate statements (see Proposition \ref{general}, for instance), there are important pieces which are not immediate. For instance, when seeking definability results for a family of $n$-indecomposable subsets, the above techniques are only useful when the subsets can be contained in a strongly connected subgroup of differential type $n.$

The indecomposability theorem of Berline and Lascar \cite{BerlineLascar} applies in the setting of superstable groups, so, specifically for groups definable in $DCF_{0,m}$. As we noted in the introduction, there is no known lower bound for Lascar rank in partial differential fields based on differential type and typical differential dimension. In fact, examples of \cite{SuerThesis} show that any such lower bound can not involve typical differential dimension (Suer constructs differential varieties of arbitrarily high typical differential dimension, differential type 1, and Lascar rank $\omega$). It is not currently known if there is an infinite transcendence degree strongly minimal type. One should note that such examples are present in the difference-differential context \cite{Medina}, but have yet to be discovered in the partial differential context. 

In certain special cases, when one has information about the Lascar rank for a differential algebraic group the indecomposability theorem given in \cite{BerlineLascar} may be applied to obtain similar conclusions as those given here. For most general classes of groups, this is not possible, however, there is at least one important exception. In the case that one considers almost simple differential algebraic group, $G/Z(G)$ is a simple differential algebraic group, to which the classification of \cite{CassidyClassification} applies. This classification gives a lower bound for the Lascar rank of $G$ in terms of the differential type.

\section{Acknowledgements}
I would like to thank Dave Marker for many helpful discussions, encouragement and support.  I would also like to thanks the referee for numerous corrections and insightful remarks. 

\bibliography{Research}{}

\begin{thebibliography}{10}

\bibitem{Baudisch}
Andreas Baudisch.
\newblock On superstable groups.
\newblock {\em Journal of the London Mathematical Society}, 42:452--464, 1990.

\bibitem{BenoistFrench}
Franck Benoist.
\newblock Rangs et types de rang maximum dans les corps differentiellement
  clos.
\newblock {\em Journal of Symbolic Logic}, Volume 67(3):1178--1188, 2002.

\bibitem{BerlineLascar}
Chantal Berline and Daniel Lascar.
\newblock Superstable groups.
\newblock {\em Annals of Pure and Applied Logic}, 30:1--43, 1986.

\bibitem{Blumberg}
Henry Blumberg.
\newblock {\em Uber algebraische Eigenschaften von linearin homogene
  Differentialausdrücken}.
\newblock PhD thesis, GeorgAugust-Universität, 1912.

\bibitem{CassidyClassification}
Phyllis Cassidy.
\newblock The classification of the semisimple differential algebraic groups
  and the linear semisimple differential algebraic lie algebras.
\newblock {\em Journal of Algebra}, 121:169--238, 1989.

\bibitem{CassidySinger}
Phyllis~J. Cassidy and Michael~F. Singer.
\newblock A {J}ordan-{H}\"{o}lder theorem for differential algebraic groups.
\newblock {\em Journal of Algebra}, 328:190--217, 2011.

\bibitem{CherlinSmall}
Gregory Cherlin.
\newblock Groups of small {M}orley rank.
\newblock {\em Annals of mathematical logic}, 17:1--28, 1979.

\bibitem{CherlinAltinel}
Gregory Cherlin and Tuna Altinel.
\newblock On central extensions of algebraic groups.
\newblock {\em Journal of Symbolic Logic}, 64:68--74, 1999.

\bibitem{DeltaCompleteness}
James Freitag.
\newblock Completeness in partial differential algebraic geometry.
\newblock {\em Under Review, Journal of Algebra,
  http://arxiv.org/abs/1106.0703}, 2011.

\bibitem{JIndecomp2}
James Freitag.
\newblock Differential algebraic groups of small typical dimension.
\newblock {\em In Preparation,
  http://math.berkeley.edu/people/faculty/james-freitag}, 2012.

\bibitem{Fgenerics}
James Freitag.
\newblock Generics in differential fields.
\newblock {\em In Preparation,
  http://math.berkeley.edu/people/faculty/james-freitag}, 2012.

\bibitem{KolchinDAAG}
Ellis~R. Kolchin.
\newblock {\em Differential Algebra and Algebraic Groups}.
\newblock Academic Press, New York, 1976.

\bibitem{KolchinDAG}
Ellis~R. Kolchin.
\newblock {\em Differential Algebraic Groups}.
\newblock Academic Press, New York, 1984.

\bibitem{PolyD}
Kondrateva, Levin, Mikhalev, and Pankratev.
\newblock {\em Differential and Difference Dimension Polynomials}.
\newblock Kluwer Academic Publishers, Dordrecht-Boston-London, 1999.

\bibitem{kondratieva1998differential}
MV~Kondratieva.
\newblock {\em Differential and difference dimension polynomials}, volume 461.
\newblock Springer, 1998.

\bibitem{Marker}
David Marker.
\newblock {\em Model Theory: an Introduction}.
\newblock Springer, Graduate Texts in Mathematics, 217, Second Edition, 2002.

\bibitem{MMP}
David Marker, Margit Messmer, and Anand Pillay.
\newblock {\em Model theory of fields}.
\newblock A. K. Peters/CRC Press, 2005.

\bibitem{McGrail}
Tracy McGrail.
\newblock The model theory of differential fields with finitely many commuting
  derivations.
\newblock {\em Journal of Symbolic Logic}, 65, No. 2:885--913, 2000.

\bibitem{Medina}
Ronald~Bustamante Medina.
\newblock Rank and dimension in difference-differential fields.
\newblock {\em Notre Dame Journal of Formal Logic}, 52, 4:403--414, 2011.

\bibitem{MP1980}
A.~V. Mikhalev and E.~V. Pankratiev.
\newblock Differential dimension polynomial of a system of differential
  equations.
\newblock {\em Algebra (In Russian), I. V. Kostrikin, ed.}, pages 51--67, 1980.

\bibitem{MPSarcs2008}
Rahim Moosa, Anand Pillay, and Thomas Scanlon.
\newblock Differential arcs and regular types in differential fields.
\newblock {\em Journal fur die reine and angewandte Mathematik}, 620:35--54,
  2008.

\bibitem{GST}
Anand Pillay.
\newblock {\em Geometric Stability Theory}.
\newblock Oxford University Press, 1996.

\bibitem{PillayDAG}
Anand Pillay.
\newblock Some foundational questions concerning differential algebraic groups.
\newblock {\em Pacific Journal of Mathematics}, 179, No. 1:179 -- 200, 1997.

\bibitem{Poizat}
Bruno Poizat.
\newblock {\em Stable Groups}.
\newblock Mathematical Surveys and monographs, volume 87, American Mathematical
  Society, 1987.

\bibitem{Rosenlitch1}
Maxwell Rosenlicht.
\newblock Some basic theorems on algebraic groups.
\newblock {\em American Journal of Mathematics}, 78:401--443, 1956.

\bibitem{Shelah}
Saharon Shelah.
\newblock {\em Classification theory and the number of non-isomorphic models.
  Studies in Logic and the Foundations of Mathematics}.
\newblock Volume 92, North-Holland Publishing Company, New York, 1978.

\bibitem{SitWell}
William Sit.
\newblock On the well-ordering of certain numerical polynomials.
\newblock {\em Transactions of the American Mathematical Society}, 212:37--45,
  1975.

\bibitem{SitPoly}
William Sit.
\newblock Differential dimension polynomials of finitely generated extensions.
\newblock {\em Proceedings of the American Mathematical Society}, 68:251--257,
  1976.

\bibitem{SuerThesis}
Sonat Suer.
\newblock {\em Model theory of differentially closed fields with several
  commuting derivations}.
\newblock PhD thesis, University of Illinois at Urbana-Champaign, 2007.

\bibitem{Suer2007Article}
Sonat Suer.
\newblock On subgroups of the additive group in differentially closed fields.
\newblock {\em Journal of Symbolic Logic}, 77:369--391, 2012.

\bibitem{Zilberindecomp}
Boris Zilber.
\newblock Groups and rings with categorical theories (in {R}ussian).
\newblock {\em Fund. Math.}, 41 (2):173--188, 1977.

\end{thebibliography}
\bibliographystyle{plain}
\end{document}